\tikzset{
  arrows along my path/.style={
    postaction={
      decorate,
      decoration={
        markings,
        mark=between positions 0.03 and 1 step 10pt with {\arrow{Stealth[length=5pt]}},
   }}}}
\newcommand{\doublespace}
   {\addtolength{\baselineskip}{0.25\baselineskip}}
\numberwithin{equation}{section}
\theoremstyle{plain}
\newtheorem{thm}{Theorem}[section]
\newtheorem{cor}[thm]{Corollary}
\newtheorem{ex}[thm]{Example}
\newtheorem{lem}[thm]{Lemma}
\newtheorem{prop}[thm]{Proposition}
\theoremstyle{definition}
\newtheorem{defn}[thm]{Definition}
\theoremstyle{remark}
\newtheorem{rem}[thm]{Remark}
\newcommand{\C}{\mathbb{C}}
\newcommand{\R}{\mathbb{R}}
\newcommand{\N}{\mathbb{N}}
\newcommand{\calL}{\mathcal{L}}
\newcommand{\calQ}{\mathcal{Q}}
\newcommand{\calP}{\mathcal{P}}
\let\Re\relax
\DeclareMathOperator{\Re}{{\sf Re}}
\let\Im\relax
\DeclareMathOperator{\Im}{{\sf Im}}
\newcommand{\dd}{{\rm d}}   
\newcommand\MP{\mathbf{MP}}  
\newcommand\ND{N(0,1)} 
\newcommand{\bb}{\mathbf{b}} 
\newcommand\bern{B(0,1)}
\newcommand{\FS}{{\mathbf{f}_{1/2}}} 
\renewcommand{\epsilon}{\varepsilon}  
\title{On Boolean selfdecomposable distributions}
\author{Takahiro Hasebe\thanks{Department of Mathematics, Hokkaido University, Kita 10, Nishi 8, Kita-Ku, Sapporo, Hokkaido, 060-0810, Japan (E-mail: thasebe@math.sci.hokudai.ac.jp)} \and 
Kei Noba\thanks{School of Statistical Thinking, The Institute of Statistical Mathematics, Midori-cho 10-3, Tachikawa, Tokyo, 190-8562, Japan (E-mail: knoba@ism.ac.jp)} \and
Noriyoshi Sakuma\thanks{Graduate School of Natural Sciences, Nagoya City University, Mizuho-ku, Nagoya, Aichi, 467-8501, Japan (E-mail: sakuma@nsc.nagoya.ac.jp)} \and
Yuki Ueda\thanks{Department of Mathematics, Hokkaido University of Education, Hokumon-cho 9, Asahikawa, Hokkaido, 070-8621, Japan (E-mail: ueda.yuki@a.hokkyodai.ac.jp)}}
\begin{document}

\maketitle  
\doublespace

\begin{abstract}
This paper introduces the class of selfdecomposable distributions concerning Boolean convolution. A general regularity property of Boolean selfdecomposable distributions is established; in particular the number of atoms is at most two and the singular continuous part is zero. We then analyze how shifting probability measures changes Boolean selfdecomposability. Several examples are presented to supplement the above results. Finally, we prove that the standard normal distribution $N(0,1)$ is Boolean selfdecomposable but the shifted one $N(m,1)$ is not for sufficiently large $|m|$.  
\end{abstract}

\section{Introduction}

In non-commutative probability theory, random variables are defined as elements in some $\ast$-algebra. A remarkable aspect of this theory is that various notions of independence exist for those random variables. From a certain viewpoint, notions of independence are classified into five ones: tensor, free, Boolean, monotone and anti-monotone independences (see \cite{Mur}). Further, each notion of independence associates the convolution of (Borel) probability measures on $\R$ that is defined to be the distribution of the sum of two independent (self-adjoint) random variables having prescribed distributions. 

Limit theorems have been central subjects in both commutative (classical) and non-commutative probability theories. Among all, Khintchine introduced the class $\calL$ of limit laws of certain independent triangular arrays. More precisely, a probability measure $\mu$ on $\R$ belongs to $\calL$ if there exist a sequence of independent $\R$-valued random variables $\{X_n\}_{n\ge1}$ and sequences of deterministic numbers $\{a_n\}_{n\ge0} \subset \R$ and $\{b_n\}_{n\ge1} \subset (0,\infty)$ such that 
\begin{itemize}
\item the family $\{b_ n X_k\}_{n \ge k \ge 1}$ forms an infinitesimal triangular array, i.e.\ 
\[
\lim_{n\to\infty}\sup_{1 \le k \le n} P[|b_n X_k| \ge \epsilon]=0  \qquad \text{for all} \qquad \epsilon>0, 
\] 
\item the law of the sequence of random variables 
\[
S_n = a_n+ b_n(X_1+ X_2 + \cdots + X_n), \qquad n \ge 1, 
\]
converges weakly to $\mu$. 
\end{itemize} 
When $\{X_n\}_{n\ge1}$ is furthermore identically distributed, the limit distribution $\mu$, if exists, is known to be a stable distribution. Thus the class $\calL$ contains the class of stable distributions as a subset.  
In 1937, L\'{e}vy characterized the class $\calL$ by the following property (see e.g.\ \cite[Theorem 1, Section 29]{GK54}): A probability measure $\mu$ on $\R$ is in $\calL$, if and only if $\mu$ is {\it selfdecomposable}, i.e., for any $c\in (0,1)$, there exists a probability measure $\mu_c$ on $\R$, such that $\mu=D_c(\mu)\ast \mu_c$, where $D_c(\mu)$ is the push-forward of $\mu$ by the mapping $x\mapsto cx$, and $\ast$ denotes the classical convolution (see e.g. \cite{Sato}). 

In non-commutative probability theory, an analogous limit theorem can be formulated for each notion of independence. Bercovici and Pata \cite{BP00}, Chistyakov and Goetze \cite{CG08} and Wang \cite{Wang08} proved a parallelism between classical Khintchine's limit theorem above and its non-commutative versions  (for further details see Subsection \ref{subsec:BP} below).  Correspondingly, notions of selfdecomposable distributions with respect to other convolutions are also defined (see Definition \ref{defn:SD} below). In particular, the two classes of freely and monotonically selfdecomposable distributions were investigated in details (see e.g. \cite{BNT02, HST19, HT2016} and \cite{FHS20}, respectively), where analogy and disanalogy with the classical class $\calL$ are discussed. By contrast, little had been done on Boolean selfdecomposable distributions. Although our definition of Boolean selfdecomposable distributions is so natural for specialists, there had been no formal definition in the literature to the authors' best knowledge.  

The purpose of this paper is to study the class of Boolean selfdecomposable distributions. Results will be presented along the following lines.  In Section 2, we introduce some concepts and some preliminary results that are used in the remainder of the paper.  In Section 3, we establish a general regularity result on Boolean selfdecomposable distributions. Especially, we demonstrate that the Boolean selfdecompsable distributions have at most two atoms and do not have singular continuous part. We then investigate how Boolean selfdecomposability changes under shifts. It turns out that Boolean selfdecomposability is typically broken when the distribution is shifted with a sufficiently large positive or negative number. 
Furthermore, we observe some distributional properties of Boolean selfdecomposable distributions through several examples. In Section 4, we study Boolean selfdecomposability for normal distributions. The results in this section are motivated by the fact that every normal distribution is both classically and freely selfdecomposable (see e.g.\ \cite{Sato} and \cite{HST19}, respectively). Our conclusion is that the standard normal distribution $N(0,1)$ is Boolean selfdecomposable too; however, the shifted one $N(m,1)$ is not Boolean selfdecomposable when $|m|$ is sufficiently large as a consequence of the aforementioned general result in Section 3. Simulations suggest that $|m| \simeq 3.09$ is the approximate threshold.

\section{Preliminaries}
In the first of this section, we shall introduce analytic tools for understanding Boolean and free additive convolutions. After that we summarize a transfer principle for limit theorems for convolutions, which induces a bijection between different kinds of infinitely divisible distributions.

\subsection{Boolean convolution and analytic tools}

The Boolean convolution of probability measures on $\R$ was defined in \cite{SW97}. We set $\calP(\R)$ to be the set of all Borel probability measures on $\R$, $\C^+$ the complex upper half-plane and $\C^-$ the complex lower half-plane. For $\mu\in \calP(\R)$, we define the \emph{Cauchy transform} $G_\mu\colon\C^{+}\to \C^{-}$ and the \emph{$F$-transform} $F_\mu\colon\C^{+}\to \C^{+}$ as follows: 
\begin{align*}
G_\mu(z):=\int_\R \frac{1}{z-x}\mu({\rm d}x) \qquad \text{and} \qquad F_\mu(z):=\frac{1}{G_\mu(z)}, \qquad z\in \C^+.
\end{align*}

The {\it self-energy function $K_\mu$ of $\mu$} is defined by
\begin{align*}
K_\mu(z):=z-F_\mu(z),\qquad z\in \C^+.
\end{align*}
Since $\Im[F_\mu(z)]\ge \Im[z]$ for $z\in \C^+$  (see \cite[Corollary 5.3]{BV93}), the function $K_\mu$ is an analytic map from $\C^+$ to $\C^- \cup \R$. The self-energy functions are characterized in the following way (see \cite[Proposition 3.1]{SW97} for further details).
\begin{prop}\label{lem:K} Let $K$ be an analytic function on $\C^+$. The following assertions are equivalent:
\begin{enumerate}
\item[\rm (1)] There exists $\mu\in \calP(\R)$ such that $K=K_\mu$.
\item[\rm (2)] There exist $b\in \R$ and a finite positive measure $\tau$ on $\R$, such that $K$ has the form
\begin{align}\label{K-function}
K(z)=b+\int_\R \frac{1+xz}{z-x}\tau({\rm d}x), \qquad z\in \C^+.
\end{align}
\end{enumerate}
If either of $(1)$ or $\rm (2)$ holds, then the pair $(b,\tau)$ is uniquely determined by $\mu$ (from formulas \eqref{eq:K1}--\eqref{eq:K2} below).   
\end{prop}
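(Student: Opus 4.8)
The plan is to read \eqref{K-function} as a Nevanlinna (Herglotz/Pick) integral representation and to derive the equivalence from the classical representation theorem for Pick functions, combined with the known description of which analytic self-maps of $\C^+$ are $F$-transforms of probability measures; this is, in essence, \cite[Proposition 3.1]{SW97}, and I outline the argument.

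For the implication $(1)\Rightarrow(2)$, I would start from $\mu\in\calP(\R)$ and consider $\varphi_\mu(z):=F_\mu(z)-z=-K_\mu(z)$. Since $\Im[F_\mu(z)]\ge\Im[z]$ by \cite[Corollary 5.3]{BV93}, the function $\varphi_\mu$ maps $\C^+$ into $\C^+\cup\R$, so the Nevanlinna representation theorem provides $\alpha\in\R$, $\beta\ge 0$ and a finite positive measure $\tau$ on $\R$ with
\[
\varphi_\mu(z)=\alpha+\beta z+\int_\R\frac{1+xz}{x-z}\,\tau({\rm d}x),\qquad z\in\C^+.
\]
Finiteness of $\tau$ is automatic in this normalization: $\tau$ differs from the usual Nevanlinna measure $\nu$ only by the density $\nu({\rm d}x)=(1+x^2)\,\tau({\rm d}x)$, and $\tau(\R)=\int_\R(1+x^2)^{-1}\,\nu({\rm d}x)<\infty$. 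The hypothesis that $\mu$ is a probability measure enters only to force $\beta=0$: from $iy\,G_\mu(iy)\to1$ as $y\to+\infty$ (dominated convergence, the integrand $iy/(iy-x)$ being bounded by $1$ and $\to1$ pointwise) one gets $F_\mu(iy)/(iy)\to1$, hence $\varphi_\mu(iy)/(iy)\to0$; dividing the displayed identity by $iy$ and letting $y\to\infty$, the constant and integral terms vanish (the latter by dominated convergence, its integrand being bounded uniformly in $y\ge 1$ and tending to $0$ pointwise), which leaves $\beta=0$. Using $\frac{1+xz}{x-z}=-\frac{1+xz}{z-x}$ then yields $K_\mu(z)=-\alpha+\int_\R\frac{1+xz}{z-x}\,\tau({\rm d}x)$, i.e.\ \eqref{K-function} with $b=-\alpha$.

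For $(2)\Rightarrow(1)$, I would put $F(z):=z-K(z)$, so that $F(z)-z=-b+\int_\R\frac{1+xz}{x-z}\,\tau({\rm d}x)$. Since $\Im\!\left[\frac{1+xz}{x-z}\right]=\frac{(1+x^2)\,\Im[z]}{|x-z|^2}\ge 0$, this is a Pick function; hence $\Im[F(z)]\ge\Im[z]>0$ and $F\colon\C^+\to\C^+$ is analytic, while the same dominated-convergence estimate gives $F(iy)/(iy)\to1$ as $y\to+\infty$. By the standard characterization of which analytic self-maps of $\C^+$ are $F$-transforms of probability measures (see e.g.\ \cite{BV93}; equivalently, apply the Nevanlinna theorem to $-1/F$ and read off that its representing data encode a probability measure $\mu$ with $G_\mu=1/F$), there is a unique $\mu\in\calP(\R)$ with $F=F_\mu$, i.e.\ $K=K_\mu$. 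Uniqueness of $(b,\tau)$ is then inherited from uniqueness in the Nevanlinna representation: $\tau$ is recovered from the nontangential boundary values of $-\Im[K_\mu]$ via Stieltjes inversion (the weight $(1+x^2)^{-1}$ being built into \eqref{K-function}), and $b=\Re[K_\mu(i)]$ since $\int_\R\frac{1+xi}{i-x}\,\tau({\rm d}x)$ is purely imaginary; these are the explicit recovery formulas referred to in the statement.

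The step I expect to be the main obstacle — modest as it is — is pinning down $\beta=0$ together with the finiteness of $\tau$: these are exactly the features that distinguish \eqref{K-function} from the representation of an arbitrary Pick function, and they have to be squeezed out of the normalization $G_\mu(z)\sim z^{-1}$ at infinity, i.e.\ out of $\mu(\R)=1$, rather than from soft complex analysis. Everything else is bookkeeping on top of the Nevanlinna representation theorem and the classical description of reciprocal Cauchy transforms.
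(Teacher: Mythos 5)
Your proof is correct, and it is essentially the standard argument: the paper itself offers no proof of this proposition, deferring to \cite[Proposition 3.1]{SW97}, and what you write (Nevanlinna representation of the Pick function $F_\mu(z)-z$, killing the linear term $\beta z$ via $F_\mu(iy)/(iy)\to 1$, and the converse via the classical characterization of reciprocal Cauchy transforms, with $(b,\tau)$ recovered from $K(i)=b-i\tau(\R)$ and Stieltjes inversion) is precisely the proof behind the cited reference. Only a cosmetic caveat: your auxiliary $\varphi_\mu(z):=F_\mu(z)-z$ collides with the paper's use of $\varphi_\mu$ for the Voiculescu transform $F_\mu^{-1}(z)-z$, so a different symbol would be advisable.
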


We call such a pair $(b,\tau)$ the {\it Boolean generating pair} for $\mu$. By Lemma \ref{K-function}, there is a one-to-one correspondence between the set $\calP(\R)$ and the set of all Boolean generating pairs. Hence, we denote by $\mu_{\uplus}^{(b,\tau)}$ the probability measure $\mu$ with a Boolean generating pair $(b,\tau)$.   The Boolean generating pair for $\mu_{\uplus}^{(b,\tau)}$ can be computed from the following formulas: 
\begin{align}
K_{\mu_{\uplus}^{(b,\tau)}}(i) &= b - i \tau(\R), \label{eq:K1}\\
\int_\alpha^\beta (1+x^2) \, \tau({\rm d}x) &= - \frac{1}{\pi} \lim_{\epsilon\to0^+}\int_\alpha^\beta \Im[K_{\mu_{\uplus}^{(b,\tau)}}(x+i\epsilon)] \,{\rm d}x \label{eq:K2}
\end{align}
for all $\alpha,\beta \in \R$ that are continuity points of $\tau$. 
The latter formula is referred to as the \emph{Stieltjes inversion formula}. 

For $\mu,\nu\in \calP(\R)$, their {\it Boolean convolution} $\mu\uplus \nu$ is the probability measure on $\R$ satisfying 
\begin{align}\label{boolean convolution}
K_{\mu\uplus \nu}(z)=K_{\mu}(z)+K_{\nu}(z), \qquad z\in \C^+.
\end{align}

The {\it $\eta$-transform} of $\mu\in\calP(\R)$ is defined by
\begin{align*}
\eta_\mu(z):=1-zF_\mu\left( \frac{1}{z}\right), \qquad z\in \C^-.
\end{align*}
It is obvious to see that $\eta_\mu(z)=zK_\mu (1/z)$, and therefore $\eta_{\mu\uplus \nu}(z)=\eta_\mu(z)+\eta_\nu(z)$ for all $z\in\C^-$. 

 A probability measure $\mu$ is said to be {\it Boolean infinitely divisible} if for each $n\in\N$ there exists $\mu_n\in \calP(\R)$ such that $\mu=\underbrace{\mu_n \uplus \cdots \uplus \mu_n}_{n\text{ times}}$. For each $\mu\in \calP(\R)$ with a Boolean generating pair $(b,\tau)$, let us set $\mu_n$ as the probability measure with Boolean generating pair $(b/n, \tau/n)$ for each $n\in \N$. Then $\mu$ is the $n$-fold Boolean convolution of $\mu_n$. Therefore every probability measure on $\R$ is Boolean infinitely divisible.

For a probability measure $\mu$ on $\R$ with a Boolean generating pair $(b,\tau)$, let us set
\begin{equation}\label{relation}
\begin{cases}
a=\tau(\{0\}),\\[2mm]
\nu({\rm d}x)=\dfrac{1+x^2}{x^2} \cdot \mathbf{1}_{\R\setminus\{0\}}(x) \tau ({\rm d}x),\\[2mm]
\displaystyle \gamma=b +\int_\R x\left(\mathbf{1}_{[-1,1]}(x)-\dfrac{1}{1+x^2}\right) \nu({\rm d}x).
\end{cases}
\end{equation}
The triplet $(a,\nu,\gamma)$ thus defined fulfills 
\begin{enumerate}[label=\rm(T)]
\item\label{item:T} $a \ge 0$, $\gamma \in \R$ and $\nu$ is a positive measure on $\R$ such that $\nu(\{0\})=0$ and $\int_\R (1\land x^2) \nu({\rm d}x)<\infty$. 
\end{enumerate}
Moreover, the set of triplets $(a,\nu,\gamma)$ satisfying \ref{item:T} is in bijection with the set of pairs $(b,\tau)$ of a real number $b$ and a finite positive measure $\tau$ on $\R$.  
In terms of this bijection, formula \eqref{K-function} has the following equivalent form
\begin{align}\label{BLKeta}
\eta_\mu(z)=zK_\mu\left(\frac{1}{z}\right)=\gamma z +a z^2+ \int_\R \left(\frac{1}{1-zx}-1-zx\mathbf{1}_{[-1,1]}(x)\right) \nu({\rm d}x).
\end{align}
The real number $a\ge 0$ is called the {\it Boolean Gaussian component} for $\mu$, and the measure $\nu$ is called the  {\it Boolean L\'{e}vy measure} for $\mu$.  The triplet $(a,\nu,\gamma)$ is called the \emph{Boolean L\'evy triplet}. 
 
 Let $\mu \in \calP(\R)$ and $\nu$ be its Boolean L\'evy measure. Let $\nu^{\rm ac}$ be the Lebesgue absolutely continuous part of $\nu$. We introduce functions $k_\mu, \ell_\mu\colon\R\setminus\{0\}\rightarrow[0,\infty)$ by 
\begin{align}\label{boolean Levy measure}
k_\mu(x) = |x| \frac{{\rm d} \nu^{\rm ac}}{{\rm d} x} (x) \qquad \text{and} \qquad \ell_\mu(x) = |x| k_\mu(x), \qquad  x \in \R \setminus\{0\}. 
\end{align}
Those functions are key ingredients in our main results.  Then $\nu^{\rm ac} ({\rm d}x)$ can be expressed in the form $x^{-2}\ell_\mu(x)\,{\rm d}x = |x|^{-1}k_\mu(x)\,{\rm d}x$.

\subsection{Free convolution and analytic tools}

The free convolution of general probability measures on $\R$ was defined in \cite{BV93}. According to \cite[Proposition 5.4]{BV93}, for any $\mu\in \calP(\R)$ and any $\lambda>0$ there exist $\alpha,\beta,M>0$ such that $F_\mu$ is univalent on the set $\Gamma_{\alpha,\beta}:=\{z\in \C^+: \Im [z]>\beta, |\Re [z]|<\alpha \Im [z]\}$ and $\Gamma_{\lambda,M}\subset F_\mu(\Gamma_{\alpha,\beta})$. This implies that the right inverse function $F_\mu^{-1}$ of $F_\mu$ exists on $\Gamma_{\lambda,M}$. The {\it Voiculescu transform} $\varphi_\mu$ is defined by
\begin{align*}
\varphi_\mu(z):=F_\mu^{-1}(z)-z, \qquad z\in \Gamma_{\lambda, M}.
\end{align*}
For $\mu,\nu\in \calP(\R)$, their {\it free convolution} $\mu\boxplus\nu$ is the probability measure satisfying
\begin{align}
\varphi_{\mu\boxplus \nu}(z)=\varphi_\mu(z)+\varphi_\nu(z)
\end{align}
for $z$ in the common domain in which the three transforms are defined.

The {\it R-transform} of $\mu\in \calP(\R)$ is defined by
\begin{align*}
R_\mu(z):=z\varphi_\mu\left(\frac{1}{z} \right), \qquad z\in \C^- \text{ with } \frac{1}{z}\in \Gamma_{\lambda,M}.
\end{align*}
By definition, it is obvious to see that $R_{\mu\boxplus \nu}(z)=R_\mu(z)+R_\nu(z)$ for all $z$ in the common domain in which the three transforms are defined.

A probability measure $\mu$ on $\R$ is said to be {\it freely infinitely divisible} (denoted by $\mu \in I(\boxplus)$) if for each $n\in \N$ there exists $\mu_n\in \calP(\R)$ such that $\mu=\underbrace{\mu_n \boxplus \cdots \boxplus \mu_n}_{n \text{ times}}$. 
Several criteria for free infinite divisibility were given by using harmonic analysis, complex analysis, and combinatorics (see e.g. \cite{BBLS11, AH13}). In particular, the following characterization of $I(\boxplus)$ is well-known (see \cite[Theorem 5.10]{BV93}). 
\begin{prop}
For $\mu \in \calP(\R)$, the following conditions are equivalent.
\begin{enumerate}
\item[\rm (1)] $\mu \in I(\boxplus)$.
\item[\rm (2)] The Voiculescu transform $\varphi_\mu$ has an analytic extension (denoted by the same symbol $\varphi_\mu$) defined on $\C^+$ with values in $\C^-\cup \R$. 
\item[\rm (3)]  There exist $b\in\R$ and a finite positive measure $\tau$ on $\R$ such that 
\begin{align}\label{Voiculescu}
\varphi_\mu(z)=b+\int_\R \frac{1+xz}{z-x}\tau({\rm d}x), \qquad z\in\C^+. 
\end{align}

\end{enumerate}
Note that such a pair $(b,\tau)$ is uniquely determined by $\mu$ for the same reason as \eqref{eq:K1}--\eqref{eq:K2}. Conversely, given $b \in \R$ and  a finite positive measure $\tau$ on $\R$, there exists $\mu \in I(\boxplus)$ such that \eqref{Voiculescu} holds. 
\end{prop}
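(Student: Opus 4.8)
The plan is to recognize this as a form of the free L\'evy--Khintchine theorem of \cite{BV93} and to prove it in three blocks: the equivalence (2)$\Leftrightarrow$(3) is a purely complex-analytic fact; (1)$\Rightarrow$(2) follows from a Hurwitz/normal-families argument applied to the $n$-th $\boxplus$-roots of $\mu$; and (3)$\Rightarrow$(1) together with the ``conversely'' clause is deduced from an existence result obtained by approximation. For (3)$\Rightarrow$(2) one simply notes $\Im\bigl[\tfrac{1+xz}{z-x}\bigr]=-\tfrac{(1+x^2)\,\Im z}{|z-x|^2}\le 0$ on $\C^+$, so the right-hand side of \eqref{Voiculescu} is itself analytic from $\C^+$ to $\C^-\cup\R$ and agrees with $\varphi_\mu$ on $\Gamma_{\lambda,M}$, hence is the required extension by the identity theorem. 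For (2)$\Rightarrow$(3) one applies the Nevanlinna (Herglotz) integral representation to the Pick function $-\varphi_\mu\colon\C^+\to\C^+\cup\R$, getting $-\varphi_\mu(z)=\alpha+\beta z+\int_\R\tfrac{1+xz}{x-z}\,\rho({\rm d}x)$ with $\alpha\in\R$, $\beta\ge 0$ and $\rho$ a finite positive measure, i.e.\ $\varphi_\mu(z)=-\alpha-\beta z+\int_\R\tfrac{1+xz}{z-x}\,\rho({\rm d}x)$; then one checks $\beta=0$ from the standard asymptotics $F_\mu(z)=(1+o(1))z$ nontangentially (which give $F_\mu^{-1}(z)=(1+o(1))z$, hence $\varphi_\mu(z)=o(z)$) together with $\beta=\lim_{y\to\infty}\Im[-\varphi_\mu(iy)]/y$. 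Thus $b=-\alpha$ and $\tau=\rho$, and uniqueness of $(b,\tau)$ is the uniqueness in the Nevanlinna representation; concretely $\tau$ is recovered from $\Im\varphi_\mu$ by Stieltjes inversion and $b$ from $\varphi_\mu(i)$, exactly as in \eqref{eq:K1}--\eqref{eq:K2}.

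For (1)$\Rightarrow$(2): if $\mu=\mu_n^{\boxplus n}$, then $n\varphi_{\mu_n}=\varphi_\mu$ on $\Gamma_{\lambda,M}$, so the $\boxplus$-root $\mu_n$ is uniquely determined and $\varphi_{\mu_n}=\tfrac1n\varphi_\mu\to 0$ there; hence $G_{\mu_n}\to 1/z$ on $\Gamma_{\lambda,M}$ and, by normality of Cauchy transforms, $G_{\mu_n}\to 1/z$ locally uniformly on $\C^+$, i.e.\ $\mu_n\to\delta_0$ weakly and $F_{\mu_n}\to\mathrm{id}$ locally uniformly. Since $\mathrm{id}$ is univalent on $\C^+$, a Hurwitz/Rouch\'e argument gives, for every compact $L\subset\C^+$ and all large $n$, a domain $\Omega_n$ on which $F_{\mu_n}$ is univalent and with $L\subset F_{\mu_n}(\Omega_n)$. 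On $F_{\mu_n}(\Omega_n)$ the function $n\varphi_{\mu_n}=n(F_{\mu_n}^{-1}-\mathrm{id})$ is then defined, takes values in $\C^-\cup\R$ (because $\Im F_{\mu_n}(w)\ge\Im w$), and coincides with $\varphi_\mu$ on $\Gamma_{\lambda,M}$; by the identity theorem these functions patch together to a single analytic map on $\bigcup_n F_{\mu_n}(\Omega_n)=\C^+$ with values in $\C^-\cup\R$ extending $\varphi_\mu$, which is (2).

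For the ``conversely'' clause and (3)$\Rightarrow$(1): given $b\in\R$ and a finite positive measure $\tau$, set $\varphi(z):=b+\int_\R\tfrac{1+xz}{z-x}\,\tau({\rm d}x)$; since $\varphi(z)=o(z)$ nontangentially, $H(z):=z+\varphi(z)$ has an analytic right inverse $F$ on some $\Gamma_{\lambda,M}$, and the real content is that $F$ is the $F$-transform of a probability measure $\mu$. This is obtained by approximating $\tau$ weakly by finitely supported measures $\tau_k$ of bounded total mass---for which $\varphi$ becomes a rational function and the corresponding $\mu_k$ can be written down explicitly as free convolutions of two-point and related laws---and passing to the weak limit via a tightness estimate and the continuity theorem for $\varphi$-transforms; equivalently, $\mu$ is the time-one distribution of the free L\'evy process with generating pair $(b,\tau)$, or the image under the Bercovici--Pata bijection of the classical infinitely divisible law with the data \eqref{relation} corresponding to $(b,\tau)$. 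Granting this, every pair as in \eqref{Voiculescu} comes from a probability measure; applying it to $(b/n,\tau/n)$ yields $\mu_n$ with $\varphi_{\mu_n}=\tfrac1n\varphi_\mu$, whence $\varphi_{\mu_n^{\boxplus n}}=\varphi_\mu$ on $\Gamma_{\lambda,M}$ and therefore $\mu_n^{\boxplus n}=\mu$ (a Voiculescu transform on a truncated cone determines $F^{-1}$, hence $F$, hence $G$, hence the measure by Stieltjes inversion), so $\mu\in I(\boxplus)$.

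I expect the main obstacle to be precisely the existence statement in the ``conversely'' clause: reconstructing a probability measure from a prescribed generating pair $(b,\tau)$. Solving the atomic cases and, above all, supplying the tightness/continuity input that legitimizes passage to the weak limit are the genuinely substantial steps, whereas the Nevanlinna representation, the $F$-transform asymptotics, and the Hurwitz argument for (1)$\Rightarrow$(2) are comparatively soft.
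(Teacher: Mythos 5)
The paper does not prove this proposition at all --- it is quoted verbatim from \cite[Theorem 5.10]{BV93} --- and your outline is essentially a correct reconstruction of Bercovici and Voiculescu's original argument: Nevanlinna representation plus the $\varphi_\mu(z)=o(z)$ asymptotics for (2)$\Leftrightarrow$(3), the normal-families/Hurwitz extension of $n\varphi_{\mu_n}$ for (1)$\Rightarrow$(2), and approximation by atomic generating pairs together with the continuity theorem for Voiculescu transforms for the existence direction. You correctly identify the existence clause as the substantial step; the tightness and continuity inputs you defer to are precisely \cite[Propositions 5.7 and 2.5-type estimates]{BV93}, so nothing in your plan would fail.
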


The above pair $(b,\tau)$ is called the {\it free generating pair} for $\mu$, and we denote by $\mu_{\boxplus}^{(b,\tau)}$ the freely infinitely divisible distribution with a free generating pair $(b,\tau)$.

For $\mu\in I(\boxplus)$ with a free generating pair $(b,\tau)$, formula \eqref{Voiculescu} is equivalent to
\begin{align}\label{FLKR}
R_\mu(z)=\gamma z + a z^2 + \int_\R \left(\frac{1}{1-zx}-1-zx\mathbf{1}_{[-1,1]}(x)\right) \nu({\rm d}x), \qquad z\in \C^-,
\end{align}
where the triplet $(a,\nu,\gamma)$ is given by \eqref{relation}, see \cite{BNT02} for further details. The real number $a\ge0$ is called the {\it free Gaussian component} for $\mu$, and the measure $\nu$ is called the {\it free L\'{e}vy measure} for $\mu$. 





\subsection{Boolean-to-free Bercovici-Pata bijection}\label{subsec:BP}

In \cite[Theorem 6.3]{BP}, Bercovici and Pata found a remarkable equivalence between limit theorems for convolutions $\ast$, $\boxplus$ and $\uplus$. It leads to a bijection between the corresponding three classes of infinitely divisible distributions. Bercovici and Pata's work was concerning limit theorems for i.i.d.\ random variables. We present here a statement in a generalized setting of infinitesimal triangular arrays, combining \cite[Theorem 1]{BP00}, \cite[Theorem 2.1]{CG08} and \cite[Theorem 5.3]{Wang08}. 
We use the notation $\mu_n \xrightarrow{w} \mu$ to mean that $\mu_n$ converges weakly to $\mu$ when $\mu_n,\mu$ are finite positive measures on $\R$.

\begin{thm}\label{BP} 
Let $\{a_n\}_{n\ge1}$ be a sequence of real numbers and $\{\mu_{n,k}\}_{1 \le k \le k_n, 1 \le n}$ be a family (or an array) of probability measures on $\R$ such that $\{k_n\}_{n\ge1}$ is a sequence of natural numbers which tends to infinity and 
\begin{equation}\label{eq:infinitesimal}
\lim_{n\to\infty} \sup_{1 \le k \le k_n}\mu_{n,k}(\R \setminus (-\epsilon, \epsilon)) =0 \qquad  \text{for all} \qquad \epsilon>0. 
\end{equation}
We set 
\[
a_{n,k} = \int_{|x|<1} x \, \mu_{n,k}({\rm d}x) \qquad \text{and} \qquad \mathring\mu_{n,k}(B) = \mu_{n,k}(B+a_{n,k}) 
\]
for Borel subsets $B$ of $\R$. 
Then the following conditions are equivalent. 
\begin{enumerate}
\item $\delta_{a_n} \uplus \mu_{n,1} \uplus  \mu_{n,2} \uplus \cdots \uplus \mu_{n,k_n}\xrightarrow{w} \mu$ for some $\mu \in \calP(\R)$ as $n\rightarrow\infty$; 
\item $\delta_{a_n} \boxplus \mu_{n,1} \boxplus  \mu_{n,2} \boxplus \cdots \boxplus \mu_{n,k_n}\xrightarrow{w} \mu'$ for some $\mu' \in \calP(\R)$ as $n\rightarrow\infty$; 
\item $\delta_{a_n} \ast \mu_{n,1} \ast  \mu_{n,2} \ast \cdots \ast \mu_{n,k_n}\xrightarrow{w} \mu''$ for some $\mu'' \in \calP(\R)$ as $n\rightarrow\infty$; 
\item  there exist  $b \in \R$ and a positive finite measure $\tau$ on $\R$  such that 
\begin{align*}
&\lim_{n\rightarrow \infty} \left[ a_n+ \sum_{k=1}^{k_n} \left( a_{n,k}+ \int_{\R}\frac{x}{x^2+1}\mathring\mu_{n,k}({\rm d}x)\right) \right]=b \qquad\text{and} \\
&\sum_{k=1}^{k_n}\frac{x^2}{x^2+1}\mathring\mu_{n,k}({\rm d}x) \xrightarrow{w}  \tau({\rm d}x) \qquad \text{as} \qquad n\to\infty. 
\end{align*}
\end{enumerate}
Moreover, if one of those statements holds then $\mu = \mu_{\uplus}^{(b,\tau)}, \mu' = \mu_{\boxplus}^{(b,\tau)}$ and $\mu''=\mu_{\ast}^{(b,\tau)}$, where the last measure is the infinitely divisible distribution characterized by the L\'evy--Khintchine representation
\[
\int_\R e^{z x} \mu_{\ast}^{(b,\tau)}({\rm d}x) = \exp\left[b z  +  \int_\R \left(e^{zx} - 1 - \frac{zx}{1+x^2} \right) \frac{1+x^2}{x^2}\, \tau({\rm d}x)  \right], \qquad z \in i \R. 
\] 
\end{thm}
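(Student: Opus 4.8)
The plan is to establish the three equivalences $(1)\Leftrightarrow(4)$, $(2)\Leftrightarrow(4)$ and $(3)\Leftrightarrow(4)$ separately, each through the transform that linearizes the corresponding convolution: the logarithm of the Fourier transform for $\ast$, the Voiculescu transform $\varphi_\mu$ (equivalently $R_\mu$) for $\boxplus$, and the self-energy function $K_\mu$ (equivalently $\eta_\mu$) for $\uplus$. The common mechanism is that, under the infinitesimality hypothesis \eqref{eq:infinitesimal}, the single-measure transform $T_{\mu_{n,k}}$ admits, on a suitable domain, a uniform first-order expansion whose leading term has exactly the shape of \eqref{K-function} / \eqref{Voiculescu} (resp.\ of the L\'evy--Khintchine exponent) with the local data $b_{n,k}=a_{n,k}+\int_\R\frac{x}{x^2+1}\mathring\mu_{n,k}({\rm d}x)$ and $\tau_{n,k}({\rm d}x)=\frac{x^2}{x^2+1}\mathring\mu_{n,k}({\rm d}x)$. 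Consequently, adding the shift $\delta_{a_n}$ and summing over $1\le k\le k_n$, condition $(4)$ --- namely $a_n+\sum_k b_{n,k}\to b$ and $\sum_k\tau_{n,k}\xrightarrow{w}\tau$ --- is precisely the assertion that the summed transforms converge, and continuity of the transform-to-measure correspondence then delivers the limits $\mu_{\uplus}^{(b,\tau)}$, $\mu_{\boxplus}^{(b,\tau)}$, $\mu_{\ast}^{(b,\tau)}$.

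For $(3)\Leftrightarrow(4)$ I would appeal to the classical theory of limit laws for infinitesimal triangular arrays (Gnedenko--Kolmogorov). The crucial device is the method of accompanying infinitely divisible laws: replace each $\mu_{n,k}$ by the infinitely divisible (compound Poisson type) law $e_{n,k}$ whose L\'evy--Khintchine exponent is the leading term above with data $(b_{n,k},\tau_{n,k})$; under \eqref{eq:infinitesimal} one shows that $\mu_{n,1}\ast\cdots\ast\mu_{n,k_n}$ converges weakly iff $e_{n,1}\ast\cdots\ast e_{n,k_n}$ does, and the latter is the single infinitely divisible law whose exponent is the sum of the $e_{n,k}$-exponents. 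Weak convergence of that family is equivalent, by the classical continuity theorem for L\'evy--Khintchine triplets, to $(4)$, and this identifies the limit as $\mu_{\ast}^{(b,\tau)}$.

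For $(2)\Leftrightarrow(4)$ I would follow Bercovici--Pata together with the triangular-array refinements of Chistyakov--Goetze and Wang. Using \cite[Proposition 5.4]{BV93} and \eqref{eq:infinitesimal}, one first produces a \emph{single} truncated cone $\Gamma_{\alpha,\beta}$ on which all right inverses $F_{\mu_{n,k}}^{-1}$, hence all $\varphi_{\mu_{n,k}}$, are simultaneously defined; on it one proves a uniform expansion $F_{\mu_{n,k}}(z)=z-\bigl(b_{n,k}+\int_\R\frac{1+xz}{z-x}\tau_{n,k}({\rm d}x)\bigr)+(\text{error})$, inverts it, sums, and invokes additivity $\varphi_{\mu_{n,1}\boxplus\cdots\boxplus\mu_{n,k_n}}=\sum_k\varphi_{\mu_{n,k}}$ to transfer $(4)$ to convergence of the free convolutions, and conversely. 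For $(1)\Leftrightarrow(4)$ the same computation is technically simpler: $K_\mu(z)=z-F_\mu(z)$ is defined on all of $\C^+$ and is exactly additive under $\uplus$, so one needs only the locally uniform expansion $K_{\mu_{n,k}}(z)=b_{n,k}+\int_\R\frac{1+xz}{z-x}\tau_{n,k}({\rm d}x)+(\text{error})$ on compacts of $\C^+$; summing, condition $(4)$ gives locally uniform convergence of $K_{\mu_{n,1}\uplus\cdots\uplus\mu_{n,k_n}}$ to a function of the form \eqref{K-function} with data $(b,\tau)$, and Proposition \ref{lem:K} together with the Stieltjes inversion formula \eqref{eq:K2} turns this into weak convergence to $\mu_{\uplus}^{(b,\tau)}$.

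The main obstacle is the uniform, $(n,k)$-independent control of the transforms. On the free side this is securing a common cone $\Gamma_{\alpha,\beta}$ carrying every $\varphi_{\mu_{n,k}}$ with uniformly small expansion error --- precisely where infinitesimality \eqref{eq:infinitesimal} is used, and which requires the quantitative estimates on $F_{\mu_{n,k}}$ from Bercovici--Pata's analysis; on the classical side it is the justification that accompanying laws may replace the $\mu_{n,k}$ without changing the limit. Everything else --- passing between vague and weak convergence of the positive measures $\sum_k\tau_{n,k}$, checking tightness, and reconciling the truncation functions $\mathbf{1}_{[-1,1]}(x)$ and $x/(x^2+1)$ occurring in the various representations \eqref{relation}, \eqref{BLKeta} --- is routine bookkeeping. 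Once the common domains and uniform expansions are in hand, all three equivalences collapse to the single fact that the asymptotics of the array are governed by one piece of linear data $(b,\tau)$, whose classical incarnation is condition $(4)$.
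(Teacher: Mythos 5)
The paper does not prove this theorem at all: it is quoted as a combination of \cite[Theorem 1]{BP00}, \cite[Theorem 2.1]{CG08} and \cite[Theorem 5.3]{Wang08}, so there is no in-paper argument to compare against. Judged on its own terms, your proposal correctly identifies the strategy that those references actually use --- linearizing each convolution by its transform ($\log\widehat\mu$, $\varphi_\mu$, $K_\mu$), showing that under \eqref{eq:infinitesimal} each single-measure transform is uniformly approximated by an expression of the shape \eqref{K-function}/\eqref{Voiculescu} with the local data $(b_{n,k},\tau_{n,k})$, and reducing all three limit statements to convergence of $(a_n+\sum_k b_{n,k},\ \sum_k\tau_{n,k})$, i.e.\ to condition (4). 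You also correctly locate where the real work sits: the construction of a single truncated cone $\Gamma_{\alpha,\beta}$ on which all $F_{\mu_{n,k}}^{-1}$ exist with uniformly small error (the heart of the free case), and the accompanying-laws substitution in the classical case.

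That said, what you have written is a plan, not a proof. The steps you defer are precisely the nontrivial ones: (i) the quantitative estimate showing that infinitesimality forces $F_{\mu_{n,k}}(z)=z+o(1)$ uniformly in $k$ on a fixed cone, which is what legitimizes both the common inversion domain and the first-order expansion with controllable error after summing $k_n\to\infty$ terms; (ii) the converse direction, where weak convergence of the convolutions must be upgraded to convergence of the pairs $(b_n,\tau_n)$ --- this needs a tightness and total-mass bound for $\sum_k\tau_{n,k}$ extracted from the transforms (e.g.\ by evaluating $K$ at $z=i$ and using Stieltjes inversion), and is more than the ``routine bookkeeping'' you label it as; and (iii) the continuity of the correspondence $(b,\tau)\mapsto\mu^{(b,\tau)}_{\uplus},\mu^{(b,\tau)}_{\boxplus},\mu^{(b,\tau)}_{\ast}$ with respect to weak convergence, which is itself a theorem in each of the three settings. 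None of these is false or unfixable --- they are exactly the content of the cited papers --- but as submitted the proposal establishes the architecture of the proof without supplying its load-bearing estimates.
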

From this result, it is natural to identify the limit distributions $\mu,\mu',\mu''$. For later use, we only formulate the map $\Lambda_B$ between the two classes $\calP(\R)$ and $I(\boxplus)$ sending $\mu_{\uplus}^{(b,\tau)}\in \calP(\R)$ to $\mu_{\boxplus}^{(b,\tau)}\in I(\boxplus)$ for each $b\in \R$ and each finite positive measure $\tau$ on $\R$. This map $\Lambda_B$ is obviously a bijection and is called the {\it Boolean-to-free Bercovici-Pata bijection}.  It turns out that $\Lambda_B$ has the following properties. 

\begin{lem}\label{lem:BFBP} Let $\mu_1,\mu_2,\mu\in \calP(\R)$ and $c\in \R$.
\begin{enumerate}
\item\label{BFBP:R_eta} $R_{\Lambda_B(\mu)}=\eta_\mu$ on $\C^-$. 
\item\label{BFBP:hom} $\Lambda_B(\mu_1\uplus\mu_2)=\Lambda_B(\mu_1) \boxplus \Lambda_B(\mu_2)$.
\item\label{BFBP:dilation} $\Lambda_B(D_c(\mu))=D_c(\Lambda_B(\mu))$. 
\item\label{BFBP:dirac} $\Lambda_B(\delta_c)=\delta_c$.
\item\label{BFBP:weak} $\Lambda_B$ is a homeomorphism with respect to weak convergence.   
\end{enumerate}
\end{lem}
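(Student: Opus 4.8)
The plan is to derive all five assertions from the single structural fact that $\Lambda_B$ is, by construction, the identity on generating pairs, combined with the parallelism between the integral representations \eqref{K-function} and \eqref{Voiculescu} (equivalently \eqref{BLKeta} and \eqref{FLKR}). For \emph{(1)}: writing $(b,\tau)$ for the Boolean generating pair of $\mu$, one has $\Lambda_B(\mu)=\mu_{\boxplus}^{(b,\tau)}$, and \eqref{BLKeta} and \eqref{FLKR} express $\eta_\mu$ and $R_{\Lambda_B(\mu)}$ on $\C^-$ by the \emph{same} expression $\gamma z+az^2+\int_\R(\frac{1}{1-zx}-1-zx\mathbf{1}_{[-1,1]}(x))\,\nu(\mathrm{d}x)$ in the Lévy triplet $(a,\nu,\gamma)$, while \eqref{relation} attaches one and the same triplet to the common pair $(b,\tau)$; hence $\eta_\mu=R_{\Lambda_B(\mu)}$ on $\C^-$. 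For \emph{(2)--(4)} I will use three ``parallel'' identities: $\eta$ linearizes $\uplus$ and $R$ linearizes $\boxplus$; $\eta_{D_c(\mu)}(z)=\eta_\mu(cz)$ and $R_{D_c(\nu)}(z)=R_\nu(cz)$ for $c\neq0$ (both from $\eta_\mu(z)=zK_\mu(1/z)$, $R_\nu(z)=z\varphi_\nu(1/z)$ and the elementary dilation rules $K_{D_c(\mu)}(z)=cK_\mu(z/c)$, $\varphi_{D_c(\nu)}(z)=c\varphi_\nu(z/c)$); and $\eta_{\delta_c}(z)=R_{\delta_c}(z)=cz$, since $K_{\delta_c}\equiv\varphi_{\delta_c}\equiv c$. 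As $I(\boxplus)$ is stable under $\boxplus$ and under dilations and contains every $\delta_c$, and as a member of $I(\boxplus)$ is determined by its $R$-transform, in each of (2)--(4) both sides lie in $I(\boxplus)$ and, by (1) together with the corresponding parallel identity, have equal $R$-transforms; hence they coincide. (The case $c=0$ of (3) is (4) with $c=0$.)

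\emph{Reduction of (5).} Since $\calP(\R)$ and $I(\boxplus)$ are metrizable in the weak topology, it suffices to prove that $\Lambda_B$ is sequentially bicontinuous, and because $\Lambda_B$ is the identity on generating pairs this will follow once I prove the two continuity statements: (i) for $\mu_n=\mu_{\uplus}^{(b_n,\tau_n)}$ and $\mu=\mu_{\uplus}^{(b,\tau)}$, one has $\mu_n\xrightarrow{w}\mu$ iff $b_n\to b$ and $\tau_n\xrightarrow{w}\tau$ (weak convergence of finite measures); and (ii) the same assertion with $\mu_{\uplus}^{(\cdot,\cdot)}$ and $K_\mu$ replaced by $\mu_{\boxplus}^{(\cdot,\cdot)}$ and $\varphi_\mu$. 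Indeed, given (i) and (ii): $\mu_n\xrightarrow{w}\mu$ forces the Boolean pairs to converge, whence $\Lambda_B(\mu_n)\xrightarrow{w}\Lambda_B(\mu)$, and conversely a weakly convergent sequence in $I(\boxplus)$ has convergent free pairs, so $\Lambda_B^{-1}$ is continuous too. (Alternatively one may invoke the classical continuity theorem for Lévy--Khintchine-type representations, or extract (i), (ii) from Theorem \ref{BP} applied to the root arrays $\mu_{\uplus}^{(b_n/k_n,\tau_n/k_n)}$, resp.\ $\mu_{\boxplus}^{(b_n/k_n,\tau_n/k_n)}$, $1\le k\le k_n$, with $k_n\to\infty$, using the uniqueness in Proposition \ref{lem:K}.)

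\emph{Proof of the continuity statements; the main obstacle.} The ``if'' direction of (i) is routine: for fixed $z\in\C^+$ the function $x\mapsto\frac{1+xz}{z-x}$ is bounded and continuous on $\R$, so $K_{\mu_n}(z)\to K_\mu(z)$ pointwise and, being uniformly bounded on compacta, locally uniformly on $\C^+$; hence $F_{\mu_n}=z-K_{\mu_n}$ and $G_{\mu_n}=1/F_{\mu_n}$ converge locally uniformly and $\mu_n\xrightarrow{w}\mu$ by the continuity theorem for Cauchy transforms. The ``only if'' direction is the crux. From $\mu_n\xrightarrow{w}\mu$ we get $K_{\mu_n}\to K_\mu$ locally uniformly on $\C^+$; putting $z=i$ in \eqref{eq:K1} yields $b_n\to b$ and $\tau_n(\R)\to\tau(\R)$, so $(\tau_n)$ is bounded and any subsequence has a vaguely convergent further subsequence $\tau_{n_j}\to\sigma$. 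The subtle point is that $\frac{1+xz}{z-x}\not\to0$ as $|x|\to\infty$, so vague convergence alone does not identify $\sigma$; but writing $\frac{1+xz}{z-x}=\frac{1+z^2}{z-x}-z$ with $\frac{1+z^2}{z-x}\in C_0(\R)$ gives $(1+z^2)G_\sigma(z)-z\,\tau(\R)=\lim_j\int_\R\frac{1+xz}{z-x}\,\tau_{n_j}(\mathrm{d}x)=(1+z^2)G_\tau(z)-z\,\tau(\R)$ for every $z\in\C^+$, so $G_\sigma\equiv G_\tau$ and $\sigma=\tau$; combined with $\tau_n(\R)\to\tau(\R)$ this upgrades vague to weak convergence. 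For (ii) I will run the same argument with \eqref{Voiculescu} replacing \eqref{K-function}, the extra ingredient being that one must transport between $F_{\nu_n}$ and $\varphi_{\nu_n}$ on a \emph{common} truncated cone $\Gamma_{\alpha,\beta}$; this rests on showing $\varphi_{\nu_n}(z)/z\to0$ as $z\to\infty$ along cones \emph{uniformly in $n$} (a dominated-convergence estimate on $\int\frac{1+xz}{z-x}$, using uniform tightness of the generating measures, itself supplied by the already-established weak convergence together with (i)), after which Hurwitz's and Vitali's theorems deliver the needed locally uniform convergence of $F_{\nu_n}$ and hence of $G_{\nu_n}$. I expect the ``$C_0(\R)$'' no-escape-of-mass step and this uniform cone-invertibility on the free side to be the only genuinely technical parts of the whole proof.
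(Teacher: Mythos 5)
The paper does not actually prove this lemma: it records that (1)--(4) are ``straightforward algebraic calculations'' and refers the reader to \cite{BN08} for (1) and to \cite{BNT02} for the rest, flagging (5) as the part needing careful analysis. Your proposal supplies the arguments the paper delegates, and it is correct in substance. For (1)--(4), identifying $\eta_\mu$ and $R_{\Lambda_B(\mu)}$ through the common representation \eqref{BLKeta}/\eqref{FLKR} attached to the shared generating pair, and then comparing $R$-transforms using the linearization, dilation and Dirac identities for $\eta$ and $R$ (plus the fact that a measure in $I(\boxplus)$ is determined by its $R$-transform), is exactly the intended verification; the dilation identities you quote are correct, and the reduction of $c=0$ to (4) is fine. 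For (5), your reduction to bicontinuity of the two parametrizations $(b,\tau)\mapsto\mu_{\uplus}^{(b,\tau)}$ and $(b,\tau)\mapsto\mu_{\boxplus}^{(b,\tau)}$ is the standard route taken in \cite{BNT02}. Your Boolean half is complete and correct: the decomposition $\frac{1+xz}{z-x}=\frac{1+z^2}{z-x}-z$, with the first term in $C_0(\R)$ and the mass convergence supplied by \eqref{eq:K1}, properly rules out escape of mass when identifying the vague subsequential limits. The free half is only sketched, but the obstacle you isolate --- a bound $\varphi_{\nu_n}(z)=o(|z|)$ in truncated cones \emph{uniform in $n$}, obtained from uniform tightness of the generating measures by splitting the integral of $zx/(z-x)$ at $|x|\le R$ and $|x|>R$ --- is the right one, and carrying it out gives precisely the uniform invertibility statement of \cite{BV93} that \cite{BNT02} relies on. So the only incompleteness is in the execution of that one known technical step, which is exactly the step the paper itself outsources to the literature; everything you do write down is sound.
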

Formulas \eqref{BFBP:R_eta}--\eqref{BFBP:dirac} can be checked by straightforward algebraic calculations, while assertion \eqref{BFBP:weak} needs careful analysis. 
The reader is referred to \cite[(2.20)]{BN08} for formula \eqref{BFBP:R_eta}  and to \cite{BNT02} for the other assertions.

Theorem \ref{BP} implies the following equivalence of L\'evy's limit theorems (the classical one is already mentioned in Introduction). Although this will not be directly used in our paper, this will serve as a good motivation for studying Boolean selfdecomposable distributions.

\begin{cor}\label{cor:L} Let $\{a_n\}_{n\ge1}$ be a sequence of real numbers, $\{b_n\}_{n\ge1}$ a sequence of positive real numbers and $\{\mu_{n}\}_{n \ge1}$ a sequence of probability measures on $\R$ such that \eqref{eq:infinitesimal} is fulfilled for the array $\mu_{n,k}:= D_{b_n}(\mu_k), 1 \le k \le n, 1 \le n$. 
Then the following conditions are equivalent. 
\begin{enumerate}  
\item $\delta_{a_n} \uplus D_{b_n}(\mu_{1} \uplus  \mu_{2} \uplus \cdots \uplus \mu_{n}) \xrightarrow{w} \mu$ for some $\mu \in \calP(\R)$ as $n\rightarrow\infty$; 
\item $\delta_{a_n} \boxplus D_{b_n}(\mu_{1} \boxplus  \mu_{2} \boxplus \cdots \boxplus \mu_{n}) \xrightarrow{w} \mu'$ for some $\mu' \in \calP(\R)$ as $n\rightarrow\infty$; 
\item $\delta_{a_n} \ast D_{b_n}(\mu_{1} \ast  \mu_{2} \ast \cdots \ast \mu_{n}) \xrightarrow{w} \mu''$ for some $\mu'' \in \calP(\R)$ as $n\rightarrow\infty$. 
\end{enumerate}
\end{cor}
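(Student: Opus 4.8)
The plan is to deduce Corollary \ref{cor:L} directly from Theorem \ref{BP} by reorganizing the rescaled convolutions into a triangular array. The key elementary input is that for $c>0$ the dilation $D_c$ is a homomorphism for each of the three convolutions, i.e.\ $D_c(\mu_1\star\mu_2)=D_c(\mu_1)\star D_c(\mu_2)$ for $\star\in\{\ast,\boxplus,\uplus\}$. First I would record this by inspecting the relevant transforms: a change of variables gives $F_{D_c\mu}(z)=cF_\mu(z/c)$, hence $K_{D_c\mu}(z)=cK_\mu(z/c)$ and $\varphi_{D_c\mu}(z)=c\,\varphi_\mu(z/c)$, while $\widehat{D_c\mu}(t)=\widehat{\mu}(ct)$ for the Fourier transform. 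Since $K$, $\varphi$ and $\log\widehat{\cdot}$ linearize the corresponding convolutions, the stated homomorphism property follows, and by iteration $D_{b_n}(\mu_1\star\cdots\star\mu_n)=D_{b_n}(\mu_1)\star\cdots\star D_{b_n}(\mu_n)$.

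With this in hand I set $k_n:=n$ and $\mu_{n,k}:=D_{b_n}(\mu_k)$ for $1\le k\le n$. Then for $j=1,2,3$ condition $(j)$ of Corollary \ref{cor:L} is \emph{literally} condition $(j)$ of Theorem \ref{BP} for this array together with the shift sequence $\{a_n\}$; note that the shift $\delta_{a_n}$ is convolved \emph{after} dilation, matching exactly the position of $\delta_{a_n}$ in Theorem \ref{BP}. The infinitesimality hypothesis \eqref{eq:infinitesimal} for the array $\{\mu_{n,k}\}$ is precisely what is assumed in the statement of the corollary, and $k_n=n\to\infty$. Hence the hypotheses of Theorem \ref{BP} are met, its conclusion applies verbatim, and the equivalence of (1)--(3) in Corollary \ref{cor:L} follows; moreover one reads off that whenever the limits exist they are $\mu=\mu_{\uplus}^{(b,\tau)}$, $\mu'=\mu_{\boxplus}^{(b,\tau)}$, $\mu''=\mu_{\ast}^{(b,\tau)}$ for a common generating pair $(b,\tau)$, so $\mu'=\Lambda_B(\mu)$ by the definition of $\Lambda_B$.

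I do not expect a genuine obstacle here: the whole content is the bookkeeping identity $D_{b_n}(\mu_1\star\cdots\star\mu_n)=D_{b_n}(\mu_1)\star\cdots\star D_{b_n}(\mu_n)$ and the verification that the triangular array $\mu_{n,k}=D_{b_n}(\mu_k)$ satisfies all the running assumptions of Theorem \ref{BP}. The only mildly delicate point is to keep straight that the corollary is the special ``i.i.d.-type'' case $k_n=n$ of the general infinitesimal-array statement, restricted to summands of the form $D_{b_n}(\mu_k)$; if one wanted row-dependent summands one would simply invoke Theorem \ref{BP} again. The purpose of stating the corollary is to isolate exactly the case corresponding to L\'evy's formulation of the class $\calL$, thereby motivating the notion of Boolean selfdecomposability studied in the sequel.
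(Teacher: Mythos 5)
Your proposal is correct and is essentially the argument the paper intends: the corollary is stated as an immediate consequence of Theorem \ref{BP}, obtained by using that $D_{b_n}$ is a homomorphism for each of $\ast$, $\boxplus$, $\uplus$ (visible from $F_{D_c\mu}(z)=cF_\mu(z/c)$, $\varphi_{D_c\mu}(z)=c\varphi_\mu(z/c)$ and the Fourier transform) so that $D_{b_n}(\mu_1\star\cdots\star\mu_n)=D_{b_n}(\mu_1)\star\cdots\star D_{b_n}(\mu_n)$, and then applying Theorem \ref{BP} to the array $\mu_{n,k}=D_{b_n}(\mu_k)$ with $k_n=n$. Your verification of the hypotheses and the identification of the limits via a common generating pair $(b,\tau)$ matches what the paper leaves implicit.
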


The possible limit distributions $\mu,\mu', \mu''$ in Corollary \ref{cor:L} belong to certain subclasses of freely, Boolean and classically infinitely divisible distributions, respectively. Those subclasses are all characterized by the respective notions of \emph{selfdecomposability} defined later in Definition \ref{defn:SD}. This fact follows e.g.\ from the last part of Theorem \ref{BP}, \cite[Theorem 1, Section 29]{GK54} and \cite[Theorem 4.8]{BNT02} (or \cite[Theorem 2.10]{CG08}) for $\ast$ and $\boxplus$. The case $\uplus$ can be treated similarly to (actually easier than) the free case. See also the paragraph following Definition \ref{defn:SD}.


\section{Boolean selfdecomposable distributions}\label{sec:SD}

\subsection{The class of Boolean selfdecomposable distributions}

The classical notion of selfdecomposability can be extended to general convolutions of probability measures in the following way. 

\begin{defn}\label{defn:SD} Let $\circ$ be a binary operation on $\calP(\R)$. A measure $\mu\in \calP(\R)$ is said to be {\it $\circ$-selfdecomposable} (denoted $\mu\in L(\circ)$) if for any $c\in (0,1)$ there is $\mu_c\in \calP(\R)$ such that $\mu=D_c(\mu)\circ \mu_c$. In particular, if $\circ =\uplus$ (resp. $\circ=\boxplus$), then $\mu$ is said to be {\it Boolean selfdecomposable} (resp. {\it freely selfdecomposable}). 
\end{defn}

By Lemma \ref{lem:BFBP} \eqref{BFBP:hom}--\eqref{BFBP:dilation} and \cite[Theorem 4.6, Proposition 4.7]{BNT02}, we have 
\begin{align}\label{eq:BP_Lclasses}
\Lambda_B(L(\uplus))=L(\boxplus) \qquad \text{ and } \qquad L(\uplus)=\Lambda_B^{-1}(L(\boxplus)).
\end{align}
It is known that the free selfdecomposability has the following characterization  (see e.g. \cite[Subsection 2.2]{HT2016}).
\begin{lem}\label{prop:FSD}
A probability measure $\mu$ on $\R$ belongs to the class $L(\boxplus)$ if and only if $\mu$ is a freely infinitely divisible distribution of which free L\'{e}vy measure $\nu$ is Lebesgue absolutely continuous and the function $\R\setminus\{0\}\rightarrow [0,\infty)$, $x \mapsto |x|\frac{d\nu}{dx}(x)$ has a version with respect to the Lebesgue measure that is unimodal with mode $0$, i.e.\ non-decreasing on $(-\infty,0)$ and non-increasing on $(0,\infty)$. 
 \end{lem}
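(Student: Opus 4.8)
The plan is to push the defining relation of $L(\boxplus)$ down to a statement about the free L\'evy measure alone, and then to resolve the resulting purely measure-theoretic question, which is exactly the one behind the classical description of $\calL$. First I would use that $L(\boxplus)\subseteq I(\boxplus)$ (see \cite[Theorem~4.6]{BNT02}), so that one may assume $\mu\in I(\boxplus)$ with free L\'evy triplet $(a,\nu,\gamma)$ and $R$-transform given by \eqref{FLKR}. The elementary identities $\varphi_{D_c(\mu)}(z)=c\,\varphi_\mu(z/c)$, hence $R_{D_c(\mu)}(z)=R_\mu(cz)$, turn the equation $\mu=D_c(\mu)\boxplus\mu_c$ into $R_{\mu_c}=R_\mu(\,\cdot\,)-R_\mu(c\,\cdot\,)$ on a common domain. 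Expanding both sides via \eqref{FLKR} and substituting $y=cx$ in the integral produced by $R_\mu(c\,\cdot\,)$, a short computation shows that $R_\mu(z)-R_\mu(cz)$ is again of the form \eqref{FLKR}, with Gaussian component $a(1-c^2)\ge 0$, L\'evy-type measure $\nu-D_c(\nu)$, and a real drift into which the mismatch between the compensators $\mathbf{1}_{[-1,1]}$ and $\mathbf{1}_{[-c,c]}$ — a finite quantity — is absorbed. Since every triplet satisfying condition~\ref{item:T} arises as the free L\'evy triplet of some freely infinitely divisible law (cf.\ \eqref{FLKR}), and since $\int_\R(1\wedge x^2)(\nu-D_c(\nu))<\infty$ is automatic from $\nu-D_c(\nu)\le\nu$, this gives the criterion: \emph{$\mu\in L(\boxplus)$ if and only if $\nu-D_c(\nu)$ is a nonnegative measure for every $c\in(0,1)$.} (In the ``only if'' direction one also uses that the residual factor $\mu_c$ is then automatically freely infinitely divisible, as in \cite[Theorem~4.6, Proposition~4.7]{BNT02}; the Gaussian and drift components impose no constraint.)

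It then remains to prove that a positive measure $\nu$ on $\R$ with $\nu(\{0\})=0$ and $\int_\R(1\wedge x^2)\,\nu(\rmd x)<\infty$ satisfies $\nu\ge D_c(\nu)$ for every $c\in(0,1)$ if and only if $\nu$ is Lebesgue absolutely continuous and $x\mapsto|x|\frac{\rmd\nu}{\rmd x}(x)$ admits a version that is non-decreasing on $(-\infty,0)$ and non-increasing on $(0,\infty)$. This coincides with the statement underlying the classical class $\calL$ (see, e.g., \cite{Sato}) and could simply be quoted; for completeness I would carry out the argument on $(0,\infty)$, the half-line $(-\infty,0)$ being symmetric. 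If $\nu(\rmd x)=k(x)|x|^{-1}\,\rmd x$ on $(0,\infty)$ with $k$ as stated, then $D_c(\nu)(\rmd x)=k(x/c)|x|^{-1}\,\rmd x$ and $k(x/c)\le k(x)$ for $x>0$, so $\nu-D_c(\nu)\ge0$. Conversely, transport $\nu|_{(0,\infty)}$ by $x\mapsto-\log x$ to a locally finite measure $\lambda$ on $\R$; the hypothesis becomes $\lambda(B)\le\lambda(B+r)$ for every Borel $B$ and every $r>0$. Integrating this in $r$ over $(0,R)$ and applying Fubini gives $R\,\lambda(B)\le\int_\R\leb\big((y-B)\cap(0,R)\big)\,\lambda(\rmd y)$, where $y-B:=\{y-b:b\in B\}$; the right-hand side vanishes whenever $\leb(B)=0$, so $\lambda\ll\leb$, say $\lambda(\rmd y)=p(y)\,\rmd y$. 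The translation inequality then forces $a\mapsto h^{-1}\int_a^{a+h}p$ to be non-decreasing for every $h>0$, so $p$ has a non-decreasing version (for instance $\widetilde p(a)=\lim_{h\downarrow0}h^{-1}\int_{a-h}^a p$). Transporting back, $\nu|_{(0,\infty)}(\rmd x)=x^{-1}p(-\log x)\,\rmd x$, hence $|x|\frac{\rmd\nu}{\rmd x}(x)=p(-\log x)$ is non-increasing on $(0,\infty)$; the same computation on $(-\infty,0)$ yields the non-decreasing part, and together they prove the lemma.

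The step I expect to be the genuine obstacle is the converse just sketched — and within it the implication ``$\nu$ dominates all of its dilations $\Rightarrow$ $\nu$ is Lebesgue absolutely continuous'', i.e.\ the exclusion of a singular continuous part; the averaging-in-$r$ trick displayed above is the device that achieves this, after which the monotone form of $|x|\frac{\rmd\nu}{\rmd x}(x)$ follows by routine Lebesgue differentiation. The rest is either bookkeeping with \eqref{FLKR} or a citation to \cite{BNT02} and \cite{Sato}.
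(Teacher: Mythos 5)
The paper does not actually prove this lemma: it is quoted as a known result with a pointer to \cite[Subsection 2.2]{HT2016}, whose ultimate source is \cite[Theorem 4.8]{BNT02} (the free analogue of the classical L\'evy--Sato description of $\calL$). Your proposal reconstructs essentially that standard argument, and it is correct. The reduction to the criterion ``$\nu\ge D_c(\nu)$ for all $c\in(0,1)$'' via $R_{D_c(\mu)}(z)=R_\mu(cz)$ and the substitution $y=cx$ is the right computation, and your measure-theoretic half (logarithmic change of variables turning dilations into translations, the averaging-in-$r$ trick to kill the singular part, then Lebesgue differentiation to produce a monotone version of the density) is a clean, self-contained proof of the fact that \cite{Sato} would otherwise be cited for. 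Two inputs deserve emphasis because they are genuinely nontrivial and you correctly outsource both to \cite{BNT02}: that $L(\boxplus)\subseteq I(\boxplus)$, and that the cofactor $\mu_c$ is automatically freely infinitely divisible. The latter is what makes the ``only if'' direction work, since one then argues that $\varphi_{\mu_c}$ and $\varphi_\mu(\cdot)-c\varphi_\mu(\cdot/c)$ both extend analytically to $\C^+$, agree there by the identity theorem, and hence have the same (necessarily positive) representing measure by uniqueness of the Pick--Nevanlinna representation \eqref{Voiculescu}; you gloss this uniqueness step, but it is routine and the structure of your argument is sound. Compared with simply citing \cite{HT2016}, as the paper does, your route buys a self-contained proof at the cost of reproving the classical dilation-monotonicity lemma; it is not a genuinely different method, just the method made explicit.
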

By the definition of $\Lambda_B$, we obtain a characterization for the Boolean selfdecomposability exactly in the same way as Lemma \ref{prop:FSD}, in which the free L\'evy measure is to be replaced with the Boolean L\'evy measure. 


\begin{prop} \label{prop:SD}
A probability measure $\mu$ on $\R$ belongs to class $L(\uplus)$ if and only if its Boolean L\'{e}vy measure is Lebesgue absolutely continuous and the function $k_\mu$ in  \eqref{boolean Levy measure} has a version with respect to the Lebesgue measure that is unimodal with mode $0$.  
\end{prop}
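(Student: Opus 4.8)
The statement is essentially a transport of Lemma~\ref{prop:FSD} through the bijection $\Lambda_B$, so the plan is to reduce the Boolean case to the already-known free case rather than to redo the analysis. The first step is to recall the two facts that do the work: by \eqref{eq:BP_Lclasses} we have $\mu \in L(\uplus)$ if and only if $\Lambda_B(\mu) \in L(\boxplus)$; and by Lemma~\ref{lem:BFBP}\eqref{BFBP:R_eta} the $R$-transform of $\Lambda_B(\mu)$ equals the $\eta$-transform of $\mu$ on $\C^-$. Comparing the free L\'evy--Khintchine form \eqref{FLKR} for $R_{\Lambda_B(\mu)}$ with the Boolean form \eqref{BLKeta} for $\eta_\mu$ — both written in terms of a triplet $(a,\nu,\gamma)$ via \eqref{relation} — one reads off that $\Lambda_B(\mu)$ and $\mu$ share the \emph{same} triplet $(a,\nu,\gamma)$; in particular the free L\'evy measure of $\Lambda_B(\mu)$ coincides with the Boolean L\'evy measure $\nu$ of $\mu$, and consequently the function $x\mapsto |x|\frac{\dd\nu}{\dd x}(x)$ appearing in Lemma~\ref{prop:FSD} is literally the function $k_\mu$ of \eqref{boolean Levy measure}.

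With this dictionary in hand the proof is a short chain of equivalences. Assume $\mu\in L(\uplus)$. Then $\Lambda_B(\mu)\in L(\boxplus)$, so by Lemma~\ref{prop:FSD} the free L\'evy measure of $\Lambda_B(\mu)$ — which is the Boolean L\'evy measure $\nu$ of $\mu$ — is Lebesgue absolutely continuous and $x\mapsto |x|\frac{\dd\nu}{\dd x}(x)=k_\mu(x)$ has a version that is unimodal with mode $0$. Conversely, if $\nu$ is absolutely continuous and $k_\mu$ has a unimodal (with mode $0$) version, then the free L\'evy measure of $\Lambda_B(\mu)$ satisfies the criterion of Lemma~\ref{prop:FSD}, hence $\Lambda_B(\mu)\in L(\boxplus)$, hence $\mu\in L(\uplus)$ by \eqref{eq:BP_Lclasses}. (One should also note at the outset that $\mu\in L(\uplus)$ forces $\mu=\Lambda_B^{-1}(\text{something in }I(\boxplus))$, but since every probability measure is Boolean infinitely divisible this imposes nothing extra — the Boolean L\'evy triplet of an arbitrary $\mu\in\calP(\R)$ always exists, which is exactly why no "infinitely divisible" hypothesis appears in the statement, in contrast to Lemma~\ref{prop:FSD}.)

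The one point that deserves care — and is the only real obstacle — is justifying the claim that $\Lambda_B(\mu)$ and $\mu$ carry the same triplet, i.e.\ that the identification of generating pairs $(b,\tau)$ built into the definition of $\Lambda_B$ translates into an identification of L\'evy triplets $(a,\nu,\gamma)$. This is immediate from the construction: $\Lambda_B$ is defined by $\mu_{\uplus}^{(b,\tau)}\mapsto \mu_{\boxplus}^{(b,\tau)}$, the passage $(b,\tau)\leftrightarrow(a,\nu,\gamma)$ via \eqref{relation} is the \emph{same} bijection in the Boolean and free settings, and formulas \eqref{BLKeta} and \eqref{FLKR} are formally identical. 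One should simply spell this out in a sentence or two, invoking \eqref{relation}, \eqref{BLKeta}, \eqref{FLKR} and Lemma~\ref{lem:BFBP}\eqref{BFBP:R_eta}, and then the two implications above finish the proof.
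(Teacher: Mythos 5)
Your proposal is correct and is essentially the paper's own argument: the authors give no formal proof but state explicitly that the characterization follows from Lemma~\ref{prop:FSD} "by the definition of $\Lambda_B$," i.e.\ by transporting the free criterion through the Bercovici--Pata bijection using \eqref{eq:BP_Lclasses} and the fact that $\mu$ and $\Lambda_B(\mu)$ share the same triplet $(a,\nu,\gamma)$. Your write-up simply spells out the details (including the correct observation that no infinite-divisibility hypothesis is needed on the Boolean side), which matches the intended proof.
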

When we consider $\mu \in L(\uplus)$, for simplicity we always take $k_\mu$ itself to be unimodal with mode 0 unless specified otherwise.

Given a probability measure $\mu$, a typical method for proving it to be Boolean selfdecomposable is Proposition \ref{prop:SD}. To check that the function $k_\mu$ is unimodal with mode 0, we can calculate $k_\mu$ from Proposition \ref{kdensity}.  To check that the Boolean L\'evy measure is Lebesgue absolutely continuous, we provide a practical sufficient condition in Proposition \ref{kdensity2}. 

\begin{prop}\label{kdensity}
Suppose that $\mu\in \calP(\R)$. 
The function $k_\mu\colon\R\setminus\{0\}\rightarrow [0,\infty)$ defined by \eqref{boolean Levy measure}  is then given by
\begin{align}\label{prop:BSF}
k_\mu(x)= \lim_{\epsilon \to 0^+}  \frac{1}{\pi |x|} \Im [ F_\mu(x+i\epsilon) ], \qquad \text{a.e.\ } x\in \R\setminus\{0\}
\end{align}
and the Boolean Gaussian component of $\mu$ is given by $-\lim_{\epsilon \to 0^+} i \epsilon F_\mu(i\epsilon)$. 
\end{prop}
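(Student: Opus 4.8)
The plan is to start from the integral representation of $K_\mu$ provided by Proposition \ref{lem:K}, pass to $F_\mu = z - K_\mu$, read off the boundary behaviour of $\Im F_\mu$ as a Poisson integral, and finally translate the outcome back into the language of the Boolean L\'evy triplet via \eqref{relation}.

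Concretely, I would fix $\mu\in\calP(\R)$ with Boolean generating pair $(b,\tau)$, so that $F_\mu(z)=z-K_\mu(z)=z-b+\int_\R\frac{1+xz}{x-z}\,\tau({\rm d}x)$ on $\C^+$. An elementary computation with $z=u+i\epsilon$, $u\in\R$, $\epsilon>0$, gives
\[
\Im[F_\mu(u+i\epsilon)]=\epsilon+\int_\R\frac{\epsilon(1+x^2)}{(x-u)^2+\epsilon^2}\,\tau({\rm d}x),
\]
so that $\frac1\pi\Im[F_\mu(u+i\epsilon)]$ equals $\epsilon/\pi$ plus the Poisson integral at $(u,\epsilon)$ of the positive measure $\sigma:=(1+x^2)\,\tau({\rm d}x)$. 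Since $\int_\R(1+x^2)^{-1}\,\sigma({\rm d}x)=\tau(\R)<\infty$, this Poisson integral is well defined, and the classical Fatou theorem on boundary values of Poisson integrals yields $\frac1\pi\Im[F_\mu(u+i\epsilon)]\to\frac{{\rm d}\sigma^{\rm ac}}{{\rm d}u}(u)$ for Lebesgue-a.e.\ $u$ as $\epsilon\to0^+$ (the $\epsilon/\pi$ term being negligible). For $\sigma$ finite this is standard; the general case I would obtain by localizing to a bounded interval, the complementary part of $\sigma$ contributing a Poisson integral that tends to zero in the interior by dominated convergence.

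It then remains to identify $\sigma^{\rm ac}$. By \eqref{relation}, on $\R\setminus\{0\}$ one has $\tau({\rm d}x)=\frac{x^2}{1+x^2}\,\nu({\rm d}x)$ while $\tau(\{0\})=a$, hence $\sigma=x^2\,\nu+a\,\delta_0$. The atom $a\delta_0$ and the singular part $x^2\,\nu^{\rm sing}$ carry no absolutely continuous mass, so $\sigma^{\rm ac}=x^2\,\nu^{\rm ac}$ and $\frac{{\rm d}\sigma^{\rm ac}}{{\rm d}x}(x)=x^2\frac{{\rm d}\nu^{\rm ac}}{{\rm d}x}(x)=|x|\,k_\mu(x)$ by \eqref{boolean Levy measure}; dividing by $|x|$ and discarding the single point $x=0$ gives \eqref{prop:BSF}. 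For the Boolean Gaussian component I would specialise to the imaginary axis: $i\epsilon F_\mu(i\epsilon)=-\epsilon^2-ib\epsilon+i\epsilon\int_\R\frac{1+ix\epsilon}{x-i\epsilon}\,\tau({\rm d}x)$, where the first two summands vanish as $\epsilon\to0^+$. The integrand of the last term converges pointwise to $-\mathbf{1}_{\{0\}}(x)$ and is bounded in modulus by $\sqrt2$ once $\epsilon\le1$ (a two-case estimate according to $|x|\le1$ or $|x|>1$), so dominated convergence—legitimate because $\tau$ is finite—gives $i\epsilon\int_\R\frac{1+ix\epsilon}{x-i\epsilon}\,\tau({\rm d}x)\to-\tau(\{0\})=-a$, whence $\lim_{\epsilon\to0^+}i\epsilon F_\mu(i\epsilon)=-a$, i.e.\ the Boolean Gaussian component equals $-\lim_{\epsilon\to0^+}i\epsilon F_\mu(i\epsilon)$.

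The only genuinely delicate point is the upgrade of the integrated Stieltjes-type inversion \eqref{eq:K2} to the pointwise-a.e.\ formula \eqref{prop:BSF}: this forces one to invoke the honest Fatou theorem for Poisson integrals (with the minor caveat that $\sigma=(1+x^2)\tau$ need not be a finite measure, only one of finite Poisson integral) and then to push the multiplication by $\frac{x^2}{1+x^2}$ cleanly through the Lebesgue decomposition of $\tau$. Everything else reduces to the routine boundary computation for $\Im F_\mu$ and a dominated-convergence argument along the imaginary axis.
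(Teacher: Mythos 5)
Your proof is correct and follows essentially the same route as the paper: both start from the representation $F_\mu(z)=z-b-\int_\R\bigl(\frac{1+t^2}{z-t}+t\bigr)\tau({\rm d}t)$, identify the absolutely continuous part of $(1+x^2)\tau=|t|^2\nu+a\delta_0$ with $|t|k_\mu(t)\,{\rm d}t$, and obtain the Gaussian component by dominated convergence along the imaginary axis. The only difference is presentational: where the paper cites the pointwise Stieltjes inversion formula (Schm\"udgen, Theorem F.6), you unpack it into the explicit Poisson-kernel computation and Fatou boundary-value theorem, which is exactly the content of that citation.
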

\begin{proof}
Both formulas are basic. Let $\nu$ be the Boolean L\'evy measure for $\mu$ and $(b,\tau)$ be the Boolean generating pair for $\mu$. By Proposition \ref{lem:K} and  relation \eqref{relation},  the F-transform of $\mu$ has the form
\begin{align}
F_\mu(z)&=z-b-\int_\R \left( \frac{1+t^2}{z-t}+t\right) \tau({\rm d}t)  \notag \\
&=z-b- \frac{a}{z}+\int_{\R} \left( \frac{1}{t-z}- \frac{t}{1+t^2}\right) |t|^2 \nu({\rm d}t).  \label{eq:F}  
\end{align}
Since the Lebesgue absolutely continuous part of $|t|^2 \nu({\rm d}t) $ is $|t|k_\mu(t) \,{\rm d}t$, the desired formula for $k_\mu$ is a consequence of the Stieltjes inversion formula (see e.g.\ \cite[Corollary 1.103]{HO07} or \cite[Theorem F.6]{Sch12}). The formula for Boolean Gaussian component follows from Dominated Convergence Theorem. 
\end{proof}

\begin{prop}\label{kdensity2}
Let $\mu\in \calP(\R)$. Suppose that there exists at most countable subset $C$ of $\R$ such that 
\begin{enumerate}[label=\rm(\arabic*)]
\item $0 \in C$,  
\item\label{item:A2} $
\lim_{\epsilon \to 0^+}  \Im [ F_\mu(x+i\epsilon)] \in [0,\infty)  
$
exists for all $x \in \R \setminus C$, 
\item\label{item:A3} $
\lim_{\epsilon \to 0^+} \epsilon F_\mu(x+i\epsilon) =0 
$
holds for all $x\in C\setminus\{0\}$. 
\end{enumerate}
Then the Boolean L\'{e}vy measure of $\mu$ is Lebesgue absolutely continuous. 
\end{prop}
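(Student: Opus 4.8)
The plan is to reformulate the claim as a statement about the boundary behaviour of the self-energy function $K_\mu$ and then apply classical harmonic analysis. Let $(b,\tau)$ be the Boolean generating pair of $\mu$ and put $\lambda(\mathrm{d}x):=(1+x^2)\,\tau(\mathrm{d}x)$. This is a positive measure on $\R$ that is finite on compacts (so a Radon measure) but possibly with $\lambda(\R)=+\infty$, and it satisfies $\int_\R (1+x^2)^{-1}\,\lambda(\mathrm{d}x)=\tau(\R)<\infty$. By \eqref{relation} the Boolean L\'evy measure of $\mu$ is $\nu(\mathrm{d}x)=\frac{1+x^2}{x^2}\mathbf{1}_{\R\setminus\{0\}}(x)\,\tau(\mathrm{d}x)$, and since the density $\frac{1+x^2}{x^2}$ (as well as $\frac{1}{1+x^2}$) is positive and locally bounded away from $0$ and $\infty$ on $\R\setminus\{0\}$, $\nu$ is Lebesgue absolutely continuous if and only if the restriction $\lambda|_{\R\setminus\{0\}}$ is. Writing $\lambda=\lambda_{\mathrm{ac}}+\lambda_{\mathrm{s}}$ for the Lebesgue decomposition, the whole problem reduces to proving $\lambda_{\mathrm{s}}(\R\setminus\{0\})=0$.

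The key elementary computation is that, from the Nevanlinna representation \eqref{K-function}, $\Im\bigl[\tfrac{1+xz}{z-x}\bigr]=-\epsilon(1+x^2)\big/\bigl((x_0-x)^2+\epsilon^2\bigr)$ at $z=x_0+i\epsilon$, hence
\[
-\frac1\pi\Im\bigl[K_\mu(x_0+i\epsilon)\bigr]=\frac1\pi\int_\R\frac{\epsilon}{(x_0-x)^2+\epsilon^2}\,\lambda(\mathrm{d}x)=(P_\epsilon*\lambda)(x_0),
\]
where $P_\epsilon(t)=\frac1\pi\frac{\epsilon}{t^2+\epsilon^2}$ is the Poisson kernel of $\C^+$; since $K_\mu(z)=z-F_\mu(z)$ this equals $\frac1\pi\Im[F_\mu(x_0+i\epsilon)]-\frac\epsilon\pi$. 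Thus $u(x_0+i\epsilon):=(P_\epsilon*\lambda)(x_0)$ is the Poisson integral of $\lambda$ and its boundary behaviour is controlled by $\Im F_\mu$. Now one invokes the classical differentiation theory of measures: $\lambda_{\mathrm{s}}$ is concentrated on the Borel set $E:=\{x_0\in\R:\limsup_{r\to0^+}\lambda\bigl((x_0-r,x_0+r)\bigr)/(2r)=+\infty\}$, and since $P_\epsilon(t)\geq(2\pi\epsilon)^{-1}$ for $|t|\leq\epsilon$ one gets $u(x_0+i\epsilon)\geq\frac{1}{2\pi\epsilon}\lambda\bigl((x_0-\epsilon,x_0+\epsilon)\bigr)$, so $\limsup_{\epsilon\to0^+}u(x_0+i\epsilon)=+\infty$ for every $x_0\in E$. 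By hypothesis \ref{item:A2} the limit $\lim_{\epsilon\to0^+}u(x_0+i\epsilon)$ exists and is finite for every $x_0\in\R\setminus C$, whence $E\subseteq C$. Therefore $\lambda_{\mathrm{s}}$ is concentrated on the countable set $C$ and is purely atomic, with all its atoms in $C$.

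It remains to kill the atoms of $\lambda$ lying in $C\setminus\{0\}$, and this is where hypothesis \ref{item:A3} enters. For $x_0\in C\setminus\{0\}$ the same representation gives
\[
\Im\bigl[\epsilon K_\mu(x_0+i\epsilon)\bigr]=-\int_\R\frac{\epsilon^2}{(x_0-x)^2+\epsilon^2}\,\lambda(\mathrm{d}x).
\]
Letting $\epsilon\to0^+$ and splitting the integral over $|x-x_0|<1$ (where $\lambda$ is finite, dominated convergence applies, and the integrand tends to $\mathbf{1}_{\{x_0\}}$) and over $|x-x_0|\geq1$ (where the integrand is $\leq C_{x_0}\,\epsilon^2(1+x^2)^{-1}$, so its integral against $\lambda$ is $O(\epsilon^2)$ by the integrability $\int(1+x^2)^{-1}\lambda(\mathrm{d}x)<\infty$), one obtains $\lim_{\epsilon\to0^+}\Im[\epsilon K_\mu(x_0+i\epsilon)]=-\lambda(\{x_0\})$. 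On the other hand $\Im[\epsilon K_\mu(x_0+i\epsilon)]=\epsilon^2-\Im[\epsilon F_\mu(x_0+i\epsilon)]\to0$ by \ref{item:A3}. Hence $\lambda(\{x_0\})=0$ for all $x_0\in C\setminus\{0\}$. Combining this with the previous paragraph, $\lambda_{\mathrm{s}}$ is purely atomic, supported in $C$, and has no atom off the origin, so $\lambda_{\mathrm{s}}(\R\setminus\{0\})=0$; consequently $\lambda|_{\R\setminus\{0\}}$, hence $\tau|_{\R\setminus\{0\}}$, hence the Boolean L\'evy measure $\nu$, is Lebesgue absolutely continuous.

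The main obstacle is the second paragraph: pointwise existence of boundary limits is in general far too weak to force absolute continuity, and what makes the argument go through is precisely the classical fact that the singular part of a measure is carried by the set where its Poisson integral blows up, together with the observation that this blow-up set is captured by the exceptional set $C$. A secondary nuisance is that $\lambda$ may have infinite total mass, so every dominated-convergence step must be justified via the integrability of $(1+x^2)^{-1}$ against $\lambda$ rather than against $\lambda$ itself. Finally, the requirement $0\in C$ is exactly what lets us ignore a possible atom of $\tau$ at the origin, which is the Boolean Gaussian component and is irrelevant to the L\'evy measure.
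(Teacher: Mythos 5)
Your proof is correct and follows essentially the same route as the paper: both pass to the Nevanlinna representing measure of $F_\mu$ (your $\lambda$ is the paper's $\rho=|t|^2\nu$ up to the atom at $0$), show its singular part is carried by the blow-up set of $\Im F_\mu$, which hypothesis (2) forces into the countable set $C$, and then use hypothesis (3) to rule out atoms off the origin. The only difference is that you prove the supporting lemma (singular part concentrated where the Poisson integral blows up) from scratch via differentiation of measures, where the paper simply cites \cite[Theorem F.6]{Sch12}.
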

\begin{proof} 
 Let $\nu$ be the Boolean L\'evy measure of $\mu$ and $\rho({\rm d}t) := |t|^2 \nu({\rm d}t)$. Note that $\rho(\{0\})=0$. It suffices to show that $\rho$ is Lebesgue absolutely continuous. 
 Let $\rho=\rho_{\rm sing} + \rho_{\rm ac}$ be the Lebesgue decomposition of $\rho$ into the singular part and absolutely continuous part with respect to the Lebesgue measure. 
From formula  \eqref{eq:F} and \cite[Theorem F.6]{Sch12},  $\rho_{\rm sing}$ is supported on the set 
\[
S_\rho^{\rm sing}:=\left\{ x \in \R: \lim_{\epsilon\to0^+}\Im[F_\mu(x+i\epsilon)] =\infty \right\}  
\]
in the sense that $\rho_{\rm sing}(\R\setminus S_\rho^{\rm sing})=0$. By assumption \ref{item:A2}, the set $S_\rho^{\rm sing}$ is contained in $C$ and hence is at most countable. This implies that $\rho_{\rm sing}$ is a discrete measure. Then assumption \ref{item:A3} implies that $\rho$ does not have an atom in $C$, so that $\rho_{\rm sing}=0$.  
\end{proof}

\begin{rem} It is clear that $L(\uplus) \nsubseteq L(\boxplus)$ since the Bernoulli distribution $\frac{1}{2}(\delta_1+\delta_{-1})$ is in $L(\uplus)$, but not in $L(\boxplus)$. 

The following example shows that $L(\boxplus) \nsubseteq L(\uplus)$. Let $\FS$ be the positive free $1/2$--stable distribution defined by 
\[
\varphi_\FS(z) = \sqrt{-z}, \qquad z \in \C^+,
\]
where the square root above is defined as the principal branch (see \cite[Appendix]{BP} for details). Stability implies selfdecomposability, so that $\FS \in L(\boxplus)$.  To see $\FS \notin L(\uplus)$,  inverting $F_\FS^{-1}(z) = z + \sqrt{-z}$ yields 
\[
F_\FS(z) = \frac{2z - 1 -\sqrt{1-4z}}{2}, \qquad z\in \C^+.  
\]
By Proposition \ref{kdensity2}, the Boolean L\'evy measure is Lebesgue absolutely continuous. By Proposition \ref{kdensity} the function $k_{\FS}$ can be calculated into 
\[
k_{\FS}(x) =   \frac{\sqrt{4x-1}}{2\pi x} \cdot \mathbf{1}_{[1/4, \infty)}(x).  
\]
Because $k_{\FS}$ is not non-increasing on $(0,\infty)$, the desired conclusion $\FS \notin L(\uplus)$ follows. 
\end{rem}

\begin{ex}\label{ex:twopointmeasure_BSD}
The probability measure having the Boolean L\'evy triplet $(a,0,\gamma)$ with $a>0$ is called the \emph{Boolean Gaussian distribution} and will be denoted $B(\gamma,a)$. It has mean $\gamma$, variance $a$ and has the form 
\begin{align}\label{eq:explictform2}
\left(\frac{1}{2}-\frac{\gamma}{2\sqrt{\gamma^2+4a^2}} \right)\delta_{(\gamma-\sqrt{\gamma^2+4a^2})/2} +\left(\frac{1}{2}+\frac{\gamma}{2\sqrt{\gamma^2+4a^2}} \right)\delta_{(\gamma+\sqrt{\gamma^2+4a^2})/2}.
\end{align}
Because the Boolean L\'evy measure is zero, $B(\gamma,a)$ is Boolean selfdecomposable.  
Note that $B(a,\gamma)$ can be written in the simpler form 
\[
\frac{1}{\beta-\alpha} ( \beta \delta_\beta-\alpha\delta_\alpha) 
\]
with parameters $\alpha=(\gamma-\sqrt{\gamma^2+4a^2})/2<0$ and $\beta=(\gamma+\sqrt{\gamma^2+4a^2})/2>0$. In fact, the map $(\gamma,a) \mapsto (\alpha,\beta)$ gives a bijection from $\R \times (0,\infty)$ to $(-\infty,0) \times (0,\infty)$. 

In particular, one can see that $p\delta_1+ (1-p)\delta_{-1}$ for $p\in (0,1)$ is Boolean Gaussian distribution if and only if $p=1/2$.

\end{ex}

\begin{ex} 
The probability measure $\bb_{\alpha, \rho}$ characterized by  
\[
\eta_{\bb_{\alpha,\rho}}(z)= - (e^{i \rho\pi}z)^{\alpha}, \qquad z\in \C^- 
\]
is called a Boolean (strictly) stable distribution, where the parameter $(\alpha,\rho)$ belongs to the set 
\begin{equation*}
\mathfrak{A}=\{(\alpha,\rho): \alpha \in(0,1], \rho \in [0,1]\} \cup \{(\alpha,\rho): \alpha\in(1,2], \rho \in [1-\alpha^{-1}, \alpha^{-1}]\}.
\end{equation*}
The Boolean stable distributions satisfy the relation $\bb_{\alpha,\rho} = D_c(\bb_{\alpha,\rho}) \uplus D_{(1-c^\alpha)^{1/\alpha}}(\bb_{\alpha,\rho})$ for any $c \in (0,1)$, and hence are Boolean selfdecomposable.  Proposition \ref{kdensity} implies the formula 
\[
k_{\bb_{\alpha,\rho}}(x)= \frac{\sin (\alpha \rho \pi)}{\pi} x^{-\alpha} \mathbf1_{(0,\infty)}(x) + \frac{\sin (\alpha (1-\rho) \pi)}{\pi} |x|^{-\alpha} \mathbf1_{(-\infty,0)}(x). 
\]
The special case $\bb_{2,1/2}$ coincides with the Bernoulli distribution $\bern$. For further information on $\bb_{\alpha, \rho}$, see e.g.\ \cite{SW97,HS15}.  
\end{ex}

\subsection{Regularity for Boolean selfdecomposable distributions}

A general regularity result on boolean selfdecomposable distributions is established as follows. Note that we exclude the well understood Boolean Gaussian distributions (see Example \ref{ex:twopointmeasure_BSD}) from the statement in order to have a non-empty support for the function $k_\mu$.  

\begin{thm} \label{thm:Lebesgue_decomposition}
Let $\mu \in L(\uplus)$ with the function $k_\mu$ not identically equal to zero. Let $\alpha := \inf\{x\in \R: k_\mu(x)\ne0\} \in [-\infty,0]$ and  $\beta := \sup\{x\in \R: k_\mu(x)\ne0\} \in [0,\infty]$. 
Then 
\begin{enumerate}
\item\label{item:AC} $\mu|_{(\alpha,\beta)}$ is absolutely continuous with respect to the Lebesgue measure.  
\item\label{item:atom}  $\mu|_{(-\infty,\alpha]}$ is the zero measure or a delta measure if $\alpha > -\infty$. The same holds for $\mu|_{[\beta,\infty)}$ if $\beta < \infty$. 
\item\label{item:atom0} $\mu(\{0\})=0$. 
\end{enumerate}
In particular, $\mu$ has no singular continuous part with respect to the Lebesgue measure and has at most two atoms. 
\end{thm}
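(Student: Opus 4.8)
The plan is to reconstruct the $F$-transform of $\mu$ from its Boolean Lévy triplet and then read off the atomic, absolutely continuous, and singular continuous parts of $\mu$ from the boundary behavior of $F_\mu$ on $\R$. Recall from \eqref{eq:F} that, writing $(a,\nu,\gamma)$ for the Boolean Lévy triplet of $\mu$ and $\rho(\mathrm{d}t):=|t|^2\nu(\mathrm{d}t)$,
\[
F_\mu(z)=z-\gamma-\frac{a}{z}+\int_\R\left(\frac{1}{t-z}-\frac{t}{1+t^2}\right)\rho(\mathrm{d}t),\qquad z\in\C^+,
\]
and that, since $\mu\in L(\uplus)$, Proposition \ref{prop:SD} tells us that $\nu$ is Lebesgue absolutely continuous with $\nu(\mathrm{d}t)=|t|^{-1}k_\mu(t)\,\mathrm{d}t$, where $k_\mu$ is unimodal with mode $0$; hence $\rho(\mathrm{d}t)=|t|\,k_\mu(t)\,\mathrm{d}t$ is also absolutely continuous and supported on $[\alpha,\beta]$. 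The atoms and the nature of $\mu$ are governed by $G_\mu=1/F_\mu$, so the whole argument reduces to locating the zeros of $F_\mu$ on $\R$ and understanding $\Im F_\mu$ there.

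First I would prove \eqref{item:AC}. On the open interval $(\alpha,\beta)$ I claim that $\lim_{\epsilon\to0^+}\Im[F_\mu(x+i\epsilon)]$ exists and is finite for a.e.\ $x$: the term $z\mapsto z-\gamma-a/z$ is real and smooth there, and for the integral term one invokes the standard boundary theory of Cauchy transforms of absolutely continuous measures (the Stieltjes/Plemelj theory, as cited after Proposition \ref{kdensity}) to get that $\Im$ of the Cauchy integral of $\rho$ converges a.e.\ to $-\pi\,\frac{\mathrm{d}\rho}{\mathrm{d}x}(x)=-\pi|x|k_\mu(x)$, which is finite a.e. Therefore $F_\mu$ has finite nontangential boundary values a.e.\ on $(\alpha,\beta)$, so $G_\mu=1/F_\mu$ has boundary values in $\C^-\cup\R\cup\{\infty\}$ a.e.; the points where $G_\mu\to\infty$ (i.e.\ $F_\mu\to0$) form a Lebesgue-null set because a nonconstant analytic function on $\C^+$ with nontangential limit $0$ on a positive-measure set must vanish identically. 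Away from that null set, the Stieltjes inversion formula shows $\mu|_{(\alpha,\beta)}$ has density $-\frac{1}{\pi}\Im[G_\mu(x+i0)]$ with respect to Lebesgue measure and no singular part inside $(\alpha,\beta)$; in particular $\mu(\{x\})=0$ for every $x\in(\alpha,\beta)$.

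Next, \eqref{item:atom}: on $(\beta,\infty)$ (assume $\beta<\infty$), the measure $\rho$ puts no mass, so $F_\mu$ extends analytically across this interval and restricts to a real-analytic function there; differentiating the representation shows $F_\mu'(x)=1+a/x^2+\int(t-x)^{-2}\rho(\mathrm{d}t)>0$, so $F_\mu$ is strictly increasing on $(\beta,\infty)$, hence has at most one zero $x_0$ there. Consequently $G_\mu$ is finite and real-analytic on $(\beta,\infty)\setminus\{x_0\}$ with nonzero imaginary boundary value nowhere, so $\mu$ restricted to $(\beta,\infty)$ has no density and no singular continuous part: its only possible mass on $[\beta,\infty)$ is an atom at $x_0$ (with weight $1/F_\mu'(x_0)$ by a residue computation, when $F_\mu(x_0)=0$), giving $\mu|_{[\beta,\infty)}$ the form zero-measure, one atom, or (if $\beta$ itself carries mass) still a single atom — one checks $\beta$ cannot be a mass point together with something in $(\beta,\infty)$ by the same monotonicity. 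Symmetrically on $(-\infty,\alpha)$. Thus $\mu$ has at most one atom in $(-\infty,\alpha]$ and at most one in $[\beta,\infty)$, and none in $(\alpha,\beta)$, for a total of at most two; combined with \eqref{item:AC} this also shows the singular continuous part of $\mu$ vanishes. Finally \eqref{item:atom0}: since $\alpha\le0\le\beta$, if $0\in(\alpha,\beta)$ then $\mu(\{0\})=0$ by \eqref{item:AC}; the boundary cases $\alpha=0$ or $\beta=0$ need the term $-a/z$ and the behavior of the Cauchy integral near $0$ — here the point is that $\int(1\wedge t^2)\nu(\mathrm{d}t)<\infty$ forces $F_\mu(z)\to$ a finite limit or $\infty$ as $z\to0$ in such a way that $G_\mu(i\epsilon)\,i\epsilon\to0$, i.e.\ $\mu(\{0\})=0$; this is a short direct estimate on the integral $\int(t-i\epsilon)^{-1}|t|k_\mu(t)\,\mathrm{d}t$ using $k_\mu$ bounded near $0$ (from unimodality, $k_\mu$ is finite at $0^\pm$).

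The main obstacle I anticipate is not \eqref{item:AC} — that is the standard Fatou/Plemelj boundary theory — but rather the bookkeeping in \eqref{item:atom}: one must carefully rule out the scenario of an atom at the endpoint $\alpha$ (or $\beta$) \emph{and} also worry whether $F_\mu$ could fail to extend analytically across $\alpha$ when $k_\mu$ does not vanish at $\alpha$ (a jump discontinuity of $k_\mu$ there). The clean way around this is to work with one-sided limits: on $(\beta,\infty)$ the analytic extension is unproblematic, and the value $F_\mu(\beta^+)=\lim_{x\downarrow\beta}F_\mu(x)$ exists in $(-\infty,\infty]$ by monotonicity, so the zero-counting argument goes through regardless of the regularity of $k_\mu$ at $\beta$; one then argues that if $F_\mu(\beta^+)\le0$ the unique zero lies at or to the right of $\beta$ and $\mu(\{\beta\})$ is absorbed into that single atom, while if $F_\mu(\beta^+)>0$ there is no atom in $[\beta,\infty)$ at all. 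Getting this case analysis airtight, together with the analogous $0$-endpoint estimate in \eqref{item:atom0}, is where the real work lies.
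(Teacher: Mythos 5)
Your treatment of part (2) follows essentially the paper's own route (analytic continuation of $F_\mu$ across $(-\infty,\alpha)$ and $(\beta,\infty)$, strict monotonicity giving at most one zero, the residue computation $\mu(\{x_0\})=1/F_\mu'(x_0)$, and a one-sided limit argument to rule out a second atom at the endpoint), and that part is sound in outline. But parts (1) and (3) have a genuine gap: nowhere do you use the unimodality of $k_\mu$ with mode $0$, which is the only hypothesis distinguishing $\mu\in L(\uplus)$ from a general measure with absolutely continuous Boolean L\'evy measure, and absolute continuity of $\rho$ alone does not imply absence of a singular part of $\mu$.

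Concretely, in (1) you show that the set where the nontangential limit of $F_\mu$ is $0$ is Lebesgue-null and then conclude there is ``no singular part inside $(\alpha,\beta)$.'' That is a non sequitur: the singular part of \emph{any} measure is carried by a Lebesgue-null set, namely the set where $-\Im[G_\mu(x+i\epsilon)]\to\infty$, so proving that set is null proves nothing; you must show it does not meet $(\alpha,\beta)\setminus\{0\}$. The paper does this by observing that unimodality with mode $0$ forces $\delta':=\inf_{(0,\beta']}k_\mu>0$ for every $0<\beta'<\beta$, whence for each fixed $x\in(0,\beta')$ one has $\Im[F_\mu(x+i\epsilon)]\ge\delta'x\arctan((\beta'-x)/\epsilon)\to\delta'x\pi/2>0$, and therefore $\limsup_{\epsilon\to0^+}(-\Im[G_\mu(x+i\epsilon)])\le 2/(\delta'x\pi)<\infty$; so no point of $(0,\beta)$ (nor, symmetrically, of $(\alpha,0)$) can carry singular mass. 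Similarly, your argument for (3) in the boundary case invokes ``$k_\mu$ bounded near $0$,'' which is the wrong direction: boundedness above is perfectly consistent with an atom at $0$ (see the paper's example $\kappa_p=\frac{p}{\pi(1+x^2)}\,{\rm d}x+(1-p)\delta_0$, whose $k$-function is bounded near $0$ yet $\kappa_p(\{0\})=1-p>0$). What is needed is again the lower bound: $\Im[F_\mu(i\epsilon)]/\epsilon\ge\delta\int_0^{\beta/2}t(t^2+\epsilon^2)^{-1}\,{\rm d}t\to\infty$, which gives $\mu(\{0\})=\lim_{\epsilon\to0^+}i\epsilon/F_\mu(i\epsilon)=0$. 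Both fixes come from the same one-line consequence of unimodality that your write-up never exploits.
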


\begin{proof}  
Let us take the Pick-Nevanlinna representation of $F_\mu$ in the form
\begin{equation}\label{eq:Pick}
F_\mu(z) =  z  -b-\frac{a}{z}+  \int_{\alpha}^\beta \left( \frac{1}{t-z} - \frac{t}{1+t^2} \right) |t| k_\mu(t) \,{\rm d}t,   
\end{equation}
where $a \ge0, b \in \R$; see \eqref{eq:F}.

\vspace{2mm}\noindent 
\eqref{item:atom} It suffices to prove the statement for $(-\infty,\alpha]$ by symmetry. We assume $\alpha > -\infty$. The formula \eqref{eq:Pick} enables $F_\mu$ to be extended analytically to $\C^+ \cup (-\infty,\alpha)$. The extended function, still denoted $F_\mu$, takes real values on $(-\infty,\alpha)$; hence, the value of the function $\Im[G_\mu(z)] = -\Im[F_\mu(z)]/ |F_\mu(z)|^2$ is zero on $(-\infty, \alpha)\setminus \{\text{zeros of $F_\mu|_{(-\infty, \alpha)}$}\}$.  By the Stieltjes inversion formula, we conclude that 
\[
\mu\left((-\infty, \alpha)\setminus \{\text{zeros of $F_\mu|_{(-\infty, \alpha)} $}\}\right) =0. 
\]
Because  $F_\mu' >1$ on $(-\infty, \alpha)$, $F_\mu$ has at most one zero on  $(-\infty, \alpha)$.  

If $F_\mu$ has no zeros on $(-\infty, \alpha)$, then $\mu|_{(-\infty,\alpha]}$ is the zero measure or a delta measure at $\alpha$. 

If $F_\mu$ has a zero $x_0$ in $(-\infty, \alpha)$, then $\mu (\{x_0\}) = \lim_{z \to x_0} \frac{z-x_0}{F_\mu(z)} = \frac{1}{F_\mu'(x_0)} >0$. In this case we need to check that $\mu(\{\alpha\})=0$. 
Because $F_\mu$ is strictly increasing and $F_\mu(x_0)=0$, we have $\lim_{x\to \alpha-0}F_\mu(x) \in (0,\infty]$ and hence $G_\mu(\alpha) :=\lim_{x\to \alpha-0}G_\mu(x) \in [0,\infty)$. It can be verified that $G_\mu(\alpha) = \lim_{\epsilon \to0^+} G_\mu(\alpha +i \epsilon)$ directly or by Lindel\"of's theorem \cite[Theorem 1.5.7]{BCD2020}. Therefore, $\mu(\{\alpha\}) =\lim_{\epsilon\to0^+} i\epsilon G_\mu(\alpha+ i\epsilon)=0$. 


\vspace{2mm}
\noindent 
\eqref{item:atom0} Because $k_\mu$ is unimodal and is not identically zero, either $\alpha<0$ or $\beta >0$ holds.  Without loss of generality we may work on the latter case. Then $\delta := \inf_{x \in (0,\beta/2)} k_\mu(x) >0$. 
Because $\mu(\{0\}) = \lim_{\epsilon \to0^+} i\epsilon / F_\mu(i\epsilon)$, the desired assertion $\mu(\{0\})=0$ follows from $\lim_{\epsilon \to0^+} \Im[F_\mu(i\epsilon)]/\epsilon =\infty$. The latter is the case because 
\begin{align*}
\frac{\Im[F_\mu(i\epsilon)]}{\epsilon} 
= 1 +\frac{a}{\epsilon^2} + \int_{\alpha}^\beta \frac{|t| k_\mu(t)}{t^2+\epsilon^2} {\rm d}t 
\ge \delta\int_{0}^{\beta/2} \frac{ t }{t^2+\epsilon^2}\, {\rm d}t 
\to  \delta\int_{0}^{\beta/2} \frac{1}{t}\, {\rm d}t = \infty 
\end{align*}
as $\epsilon \to 0^+$. 

\vspace{2mm}\noindent 
\eqref{item:AC} Let $\mu=\mu_{\rm sing} + \mu_{\rm ac}$ be the Lebesgue decomposition of $\mu$ into the singular part and absolutely continuous part with respect to the Lebesgue measure.  From the established fact  \eqref{item:atom0}, it suffices to prove $\mu_{\rm sing}((\alpha, \beta)\setminus\{0\}) =0$. 
As in the proof of Proposition \ref{kdensity2},  $\mu_{\rm sing}(\R\setminus S)=0$, where
\[
S:=\left\{ x \in \R: \lim_{\epsilon\to0^+}\Im[- G_\mu(x+i\epsilon)] =\infty \right\}.   
\]

Let us assume that $\beta >0$; otherwise we need not work on the interval $(0,\beta)$. 
Because $k_\mu$ is unimodal with mode 0, for every $0<\beta' <\beta$ the number $\delta':= \inf_{x \in (0,\beta']} k _\mu(x) $ is positive. For every $\epsilon >0$ and $x \in (0,\beta')$ we have 
\begin{align*}
\Im[F_\mu(x+ i\epsilon)] 
&= \epsilon  +\frac{a}{\epsilon} + \int_{\alpha}^\beta \frac{\epsilon |t| k_\mu(t)}{(t-x)^2+\epsilon^2} {\rm d}t \\
&\ge  \delta' x\int_{x}^{\beta'}  \frac{\epsilon}{(t-x)^2+\epsilon^2} {\rm d}t \\
&=  \delta' x\arctan \frac{\beta' -x}{\epsilon}, 
\end{align*}
which tends to $\delta' x  \pi/2$ as $\epsilon$ tends to $0$. 
This implies that 
\[
\limsup_{\epsilon\to0^+} \Im[- G_\mu(x+i\epsilon)] =\limsup_{\epsilon\to0^+} \frac{\Im[F_\mu(x+i\epsilon)]} {|F_\mu(x+i\epsilon)|^2}  \le \limsup_{\epsilon\to0^+} \frac{1} {\Im[F_\mu(x+i\epsilon)]}  \le \frac{2}{\delta' x  \pi} < \infty
\]
for all $x \in (0,\beta')$. We can thus conclude that $(0,\beta) \subseteq \R\setminus S$, i.e.\  $\mu_{\rm sing}((0,\beta)) =0$. Similarly or by symmetry, we get  $\mu_{\rm sing}((\alpha, 0)) =0$ if $\alpha<0$. 
\end{proof}

\begin{rem}
The delta measures and Boolean Gaussian distributions are obviously singular distributions. Oher Boolean selfdecomposable distributions may also have a non-zero singular part, see e.g.\ Example \ref{ex:SD} \eqref{item:Kesten}, \eqref{item:B}. By contrast, all classical selfdecomposable distributions and free ones except for the delta measures are Lebesgue absolutely continuous (see \cite[Theorem 27.13]{Sato} and \cite[Remark 2]{HT2016}, respectively). 
\end{rem}

\begin{rem} \label{rem:atom}
In the setting of Theorem \ref{thm:Lebesgue_decomposition}, the points $\alpha$ and $\beta$ may or may not be an atom of $\mu$. To see this let $\mu$ be the Boolean selfdecomposable probability measure defined by 
\[
F_{\mu}(z) = z -b + \int_0^1 \frac1{t-z} |t| \frac{(1-t)^p}{|t|} \,{\rm d}t, \qquad z \in \C^+,  
\]
where $p> 0$ is a parameter and $b \in \R$ is defined so that $F_{\mu}(1):=  1 - b - \int_0^1(1-t)^{p-1} \,{\rm d}t=0$. One sees that 
\begin{align*}
\frac{F_\mu(z)}{z-1}
&= \frac{F_\mu(z) - F_\mu(1)}{z-1} = 1 + \int_0^1 \frac{(1-t)^{p-1}}{z-t}   \,{\rm d}t  \to q:= 1 + \int_0^1 (1-t)^{p-2} \,{\rm d}t \in (1,\infty]
\end{align*}
 as $z \to 1$ satisfying $\Re[z] \ge1$. This implies that $\mu(\{1\}) = 1/q$ which is positive if $p>1$ and zero if $0 < p \le 1$.  
 
\end{rem}

\begin{ex}[Mixture of Cauchy distribution and $\delta_0$] 
Let $\kappa_p = \frac{p}{\pi(1+x^2)} \mathbf{1}_{\R}(x)\,{\rm d}x + (1-p)\delta_0$ for $p \in [0,1]$. Its reciprocal Cauchy transform is given by 
\[
F_{\kappa_p}(z) =\frac1{\frac{p}{z+i} + \frac{1-p}{z}} = \frac{z (z+i)}{z + (1-p)i}, \qquad z \in \C^+.
\]
By Proposition \ref{kdensity2}, the Boolean L\'evy measure is Lebesgue absolutely continuous. By Proposition \ref{kdensity}, the function $k_{\kappa_p}$ is given by 
\[
k_{\kappa_p}(x) = \frac{1}{\pi}\cdot\frac{p|x|}{x^2 + (1-p)^2}\cdot \mathbf{1}_{\R\setminus\{0\}}(x). 
\]
As long as $p\in (0,1)$, this is not unimodal with mode 0 and hence $\kappa_p$ is not Boolean selfdecomposable (cf.\ Theorem \ref{thm:Lebesgue_decomposition} (3)).   
\end{ex}

According to Theorem \ref{thm:Lebesgue_decomposition}, every Boolean selfdecomposable distribution $\mu$ with $k_\mu\neq 0$ has at most two atoms. Here we completely determine the  two point measures which are Boolean selfdecomposable. 

\begin{prop}\label{prop:twopointmeasure_BSD}
A two point probability measure on $\R$ belongs to $L(\uplus)$ if and only if it is a Boolean Gaussian distribution.
\end{prop}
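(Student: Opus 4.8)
The plan is to characterize two point measures via their $F$-transforms and compare with the explicit form of Boolean Gaussian distributions given in Example \ref{ex:twopointmeasure_BSD}. A two point probability measure can be written as $\mu = p\delta_\alpha + (1-p)\delta_\beta$ with $\alpha < \beta$ and $p \in (0,1)$. I would first compute its $F$-transform directly:
\[
G_\mu(z) = \frac{p}{z-\alpha} + \frac{1-p}{z-\beta} = \frac{z - (p\beta + (1-p)\alpha)}{(z-\alpha)(z-\beta)},
\]
so $F_\mu(z) = (z-\alpha)(z-\beta)/(z - m)$ where $m := p\beta + (1-p)\alpha$ is the mean. The self-energy function $K_\mu(z) = z - F_\mu(z)$ is then a rational function; writing it in the Pick--Nevanlinna form \eqref{K-function}, the measure $\tau$ is a sum of at most one point mass (the residue at the single pole $z = m$), so the Boolean L\'evy measure $\nu$ is also purely atomic, supported on at most $\{m\}$.

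Next I would invoke Proposition \ref{prop:SD}: for $\mu \in L(\uplus)$ the Boolean L\'evy measure must be Lebesgue absolutely continuous, and we have just seen it is purely atomic; hence $\nu = 0$, i.e.\ $\mu$ has Boolean L\'evy triplet $(a, 0, \gamma)$ for some $a \ge 0$ and $\gamma \in \R$. If $a = 0$ then $\mu = \delta_\gamma$ is a one point measure, contradicting that $\mu$ is genuinely two point; hence $a > 0$, which by definition means $\mu$ is a Boolean Gaussian distribution $B(\gamma, a)$. The converse direction is already established in Example \ref{ex:twopointmeasure_BSD}: every $B(\gamma,a)$ with $a>0$ has zero Boolean L\'evy measure and therefore lies in $L(\uplus)$, and by \eqref{eq:explictform2} it is a genuine two point measure.

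The only slightly delicate point is verifying that $\nu$ being supported on $\{m\}$ forces it to have an atom there rather than being zero automatically --- but this is not needed: the argument instead shows that absolute continuity of $\nu$ (required by Proposition \ref{prop:SD}) together with $\nu$ being concentrated on a single point forces $\nu = 0$ outright. So there is really no obstacle; the main content is the bookkeeping that relates the residue of $K_\mu$ at $z = m$ to the atom of $\tau$, hence to $\nu$ via \eqref{relation}, and confirming via the bijection between triplets $(a,\nu,\gamma)$ and pairs $(b,\tau)$ that $\nu = 0$ is equivalent to $\tau$ being a multiple of $\delta_0$. I would also double-check that the pole of $F_\mu$ at $z = m$ indeed lies in $(\alpha,\beta)$ (it does, as a convex combination with weights in $(0,1)$), so that the corresponding atom of $\tau$ sits at the right place and the formula $a = \tau(\{0\})$ in \eqref{relation} only fires when $m = 0$; but this subtlety is harmlessly absorbed by the shift invariance implicit in the triplet formalism, since $L(\uplus)$ membership depends only on $k_\mu$ and the preceding computation shows $k_\mu \equiv 0$ regardless of where $m$ sits.
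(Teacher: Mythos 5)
Your proof is correct and follows essentially the same route as the paper: both exploit that a two point measure has a rational $F$-transform, so its Boolean L\'evy measure has no absolutely continuous part, and then invoke Proposition \ref{prop:SD} to force that measure to vanish, leaving a Boolean Gaussian with $a>0$ (the case $a=0$ being ruled out because it would give a Dirac mass). One cosmetic slip: with $\mu=p\delta_\alpha+(1-p)\delta_\beta$ the pole $m=p\beta+(1-p)\alpha$ of $F_\mu$ is not the mean (that would be $p\alpha+(1-p)\beta$), but nothing in your argument depends on this label.
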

\begin{proof}
The Boolean Gaussian distribution $B(\gamma,a)$ is a Boolean selfdecomposable two point probability measure for all $\gamma\in \R$ and $a>0$, see \eqref{eq:explictform2}. 

Conversely, we assume that a two point probability measure $\mu$ belongs to $L(\uplus)$. By Proposition \ref{kdensity} we get $k_\mu(x)=0$ for a.e.\ $x\in \R\setminus\{0\}$ because $F_\mu$ is of the form $(z^2 + p z +q)/(z-r)$, $p,q,r \in \R$. Therefore $\mu$ has a Boolean L\'evy triplet $(a,0,\gamma)$ for some $\gamma \in \R$ and some $a\ge0$. The Boolean Gaussian component $a$ is nonzero; otherwise $\mu$ would be a Dirac measure. Consequently, we have $\mu=B(\gamma,a)$ for $\gamma \in \R$ and $a>0$.
\end{proof}

\subsection{Boolean selfdecomposability of shifted probability measures}

For any $a \in \R$, ``the Boolean shift'' $\mu \mapsto \mu \uplus \delta_a$ preserves the class $L(\uplus)$. On the other hand, the usual shift $\mu \mapsto \mu \ast \delta_a$ does not preserve $L(\uplus)$, which can be observed from Proposition  \ref{prop:twopointmeasure_BSD}. This phenomenon is investigated in details below. The function $\ell_\lambda$ defined in \eqref{boolean Levy measure} plays a key role.

\begin{lem}\label{lem:shift} Suppose that $\lambda \in \calP(\R)$ satisfies the condition 
\begin{enumerate}[label=\rm(C)]
\item\label{item:C} the Boolean Gaussian component  is zero and the Boolean L\'evy measure is  Lebesgue absolutely continuous. 
\end{enumerate}
Then for any $m \in \R$ the measure $\lambda \ast \delta_m$ also satisfies condition \ref{item:C} and 
\begin{equation}\label{eq:spectral_shift}
\ell_{\lambda\ast \delta_m}(t) = \ell_\lambda(t-m) \quad \text{for a.e.~}  t \in \R. 
\end{equation}
\end{lem}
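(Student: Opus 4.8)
The plan is to reduce the lemma to the elementary identity
\[
F_{\lambda\ast\delta_m}(z)=F_\lambda(z-m),\qquad z\in\C^+,
\]
and then to transport the integral representation \eqref{eq:F} of $F_\lambda$ through the real translation $w\mapsto w-m$. The identity itself is immediate: $\lambda\ast\delta_m$ is the push-forward of $\lambda$ under $x\mapsto x+m$, so $G_{\lambda\ast\delta_m}(z)=\int_\R(z-m-y)^{-1}\,\lambda({\rm d}y)=G_\lambda(z-m)$, and taking reciprocals gives the claim. This is the only input from classical convolution that the proof needs.

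Next I would use condition \ref{item:C} for $\lambda$. It says precisely that the Boolean Gaussian component of $\lambda$ is $0$, so that the representing measure appearing in \eqref{eq:F} is $\rho_\lambda({\rm d}t):=|t|^2\,\nu_\lambda({\rm d}t)$, and that this $\rho_\lambda$ is Lebesgue absolutely continuous; by \eqref{boolean Levy measure} its density is exactly $\ell_\lambda$, since $\ell_\lambda(t)=|t|^2\,\frac{{\rm d}\nu_\lambda}{{\rm d}t}(t)$. Substituting $w=z-m$ in $F_\lambda(w)=w-b_\lambda+\int_\R\bigl(\tfrac1{t-w}-\tfrac{t}{1+t^2}\bigr)\rho_\lambda({\rm d}t)$ and changing the integration variable $t\mapsto t+m$, I would arrive at a representation of $F_{\lambda\ast\delta_m}$ of the same shape whose representing measure is the translate $\widetilde\rho_m$ of $\rho_\lambda$ by $m$; here one has to note that the substitution produces the centering function $\tfrac{t-m}{1+(t-m)^2}$ instead of the canonical $\tfrac{t}{1+t^2}$, but their difference is continuous and $O\bigl((1+t^2)^{-1}\bigr)$ at infinity, hence $\widetilde\rho_m$-integrable, so it can be absorbed into the real constant. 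By the uniqueness in Proposition \ref{lem:K}, $\widetilde\rho_m$ is the representing measure attached to $\lambda\ast\delta_m$; being a translate of an absolutely continuous measure it is absolutely continuous with no atom at $0$, so the Boolean Gaussian component of $\lambda\ast\delta_m$ is $\widetilde\rho_m(\{0\})=0$ and its Boolean L\'evy measure, equal to $|t|^{-2}\widetilde\rho_m({\rm d}t)$, is absolutely continuous. Thus $\lambda\ast\delta_m$ satisfies \ref{item:C}. Finally, the density of $\widetilde\rho_m$ is $t\mapsto\ell_\lambda(t-m)$, so the Boolean L\'evy measure of $\lambda\ast\delta_m$ has density $t\mapsto|t|^{-2}\ell_\lambda(t-m)$ and hence $\ell_{\lambda\ast\delta_m}(t)=|t|^2\cdot|t|^{-2}\ell_\lambda(t-m)=\ell_\lambda(t-m)$ for a.e.\ $t$, which is \eqref{eq:spectral_shift}.

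I expect the only real obstacle to be the bookkeeping in the change of variables: one must verify the integrability of the correction coming from the mismatch of the two centering functions so that it may legitimately be absorbed into the constant, and then invoke the uniqueness of the Boolean generating pair (Proposition \ref{lem:K}) to conclude that the representing measure of $F_{\lambda\ast\delta_m}$ genuinely is the translate $\widetilde\rho_m$, rather than merely some measure producing the same $F$-transform. Once this is in place, the remaining assertions follow from translation invariance of the Lebesgue measure. As a cross-check, and as an alternative derivation of \eqref{eq:spectral_shift} once \ref{item:C} is known, one may instead apply Proposition \ref{kdensity} --- which gives $\ell_\mu(x)=\tfrac1\pi\lim_{\epsilon\to0^+}\Im F_\mu(x+i\epsilon)$ for a.e.\ $x$ --- to both $\mu=\lambda\ast\delta_m$ and $\mu=\lambda$, and combine it with $F_{\lambda\ast\delta_m}(z)=F_\lambda(z-m)$.
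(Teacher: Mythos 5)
Your proof is correct and follows essentially the same route as the paper's: both plug the identity $F_{\lambda\ast\delta_m}(z)=F_\lambda(z-m)$ into the integral representation \eqref{eq:F}, translate the representing measure $\ell_\lambda(t)\,{\rm d}t$, absorb the centering discrepancy into the real constant, and read off condition \ref{item:C} and \eqref{eq:spectral_shift} from the uniqueness of the representation. Your write-up is, if anything, more explicit than the paper's about the integrability of the centering correction and the appeal to uniqueness of the generating pair, both of which the paper leaves implicit.
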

\begin{proof}
 According to \eqref{boolean Levy measure}, formula \eqref{eq:F} can be expressed in the form
 \begin{align}
F_{\lambda}(z) &=z - b-\frac{a}{z} + \int_{\R\setminus\{0\}} \left( \frac{1}{t-z}- \frac{t}{1+t^2}\right) \ell_\lambda(t){\,\rm d}t,   \label{eq:F3}
\end{align}
 where $a$ is the Boolean Gaussian component, which is now 0. 
This yields 
\begin{align}
F_{\lambda\ast \delta_m}(z) =F_\lambda(z-m) =z- b_m + \int_{\R\setminus\{0\}} \left( \frac{1}{t-z}- \frac{t}{1+t^2}\right) \ell_\lambda(t-m) {\,\rm d}t  \label{eq:F2}
\end{align}
for some $b_m \in \R$. We can therefore conclude that the Boolean Gaussian component  of $\lambda\ast \delta_m$ is zero and its Boolean L\'evy measure equals $t^{-2} \ell_\lambda(t-m) \mathbf{1}_{\R\setminus\{0\}}(t)\, {\rm d}t$, as desired. 
\end{proof}

For a clear statement, we define $\calQ$ to be the set of probability measures $\lambda$ on $\R$ such that 

\begin{enumerate}[label=\rm(H\arabic*)]
\item\label{item:H1} the Boolean Gaussian component of $\lambda$ is zero, 

\item\label{item:H2} the Boolean L\'evy measure of $\lambda$ admits the form $t^{-2}\ell_\lambda (t)\mathbf{1}_{\R\setminus\{0\}}(t)\,{\rm d}t$, where $\ell_\lambda\colon \R \to [0,\infty)$ is right continuous on $\R\setminus \{m_\lambda\}$ for some $m_\lambda \in \R$. 
\end{enumerate}

\begin{thm} \label{thm:shift}

\begin{enumerate}
\item \label{item:Gaussian} If $\lambda \in \calP(\R) \setminus  \calQ$  then $\lambda \ast \delta_m\notin L(\uplus)$ for any $m\in \R \setminus\{0\}$. 

\item \label{item:plus} If $\lambda\in \calQ$ and there exist  $a,b \in \R\setminus \{m_\lambda\}$ with $a < b$ and $\ell_\lambda(a) < \ell_\lambda(b)$ then $\lambda \ast \delta_m \notin L(\uplus)$ for any $m > \frac{b\ell_\lambda(a) - a \ell_\lambda(b)}{\ell_\lambda(b) -\ell_\lambda(a)}$. 

\item \label{item:minus} If $\lambda\in \calQ$ and there exist $c,d \in \R\setminus \{m_\lambda\}$ with $c<d$ and $\ell_\lambda(c) >\ell_\lambda(d)$ then $\lambda \ast \delta_{m} \notin L(\uplus)$ for any $m  <   \frac{c\ell_\lambda(d) - d \ell_\lambda(c)}{\ell_\lambda(c) -\ell_\lambda(d)}$. 

\item  \label{item:flat} If $\lambda\in \calQ$ and $\ell_\lambda$ is constant on $\R\setminus \{m_\lambda\}$ then $\lambda \ast \delta_m \in L(\uplus)$ for all $m \in \R$. 
\end{enumerate}
\end{thm}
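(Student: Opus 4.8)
The plan is to read off all four parts from the characterization of $L(\uplus)$ in Proposition~\ref{prop:SD} together with the shift identity \eqref{eq:spectral_shift}. The organizing observation is that whenever $\lambda$ fulfills condition~\ref{item:C} — which, by \ref{item:H1}--\ref{item:H2}, is the case for every $\lambda\in\calQ$ — Lemma~\ref{lem:shift} gives that $\lambda\ast\delta_m$ again fulfills \ref{item:C} and, recalling $\ell_\mu(x)=|x|k_\mu(x)$,
\[
k_{\lambda\ast\delta_m}(t)=\frac{\ell_{\lambda\ast\delta_m}(t)}{|t|}=\frac{\ell_\lambda(t-m)}{|t|}\qquad\text{for a.e.\ }t\in\R\setminus\{0\}.
\]
So, by Proposition~\ref{prop:SD}, for $\lambda\in\calQ$ the question whether $\lambda\ast\delta_m\in L(\uplus)$ reduces to whether this function has a version that is non-decreasing on $(-\infty,0)$ and non-increasing on $(0,\infty)$; parts~\ref{item:plus} and \ref{item:minus} are the situations where that monotonicity is broken and part~\ref{item:flat} is where it is retained. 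Part~\ref{item:Gaussian}, where $\lambda\notin\calQ$, will instead be handled directly from the Pick representation \eqref{eq:F} of the $F$-transform.

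For part~\ref{item:plus} I would argue by contradiction, assuming $\lambda\ast\delta_m\in L(\uplus)$. Since $a,b\ne m_\lambda$, right continuity of $\ell_\lambda$ at $a$ and at $b$ keeps $\ell_\lambda$ close to $\ell_\lambda(a)$ on $[a,a+\varepsilon)$ and to $\ell_\lambda(b)$ on $[b,b+\varepsilon)$ for small $\varepsilon$. The hypothesis on $m$ rearranges into the two statements $a+m>0$ — because $\frac{b\ell_\lambda(a)-a\ell_\lambda(b)}{\ell_\lambda(b)-\ell_\lambda(a)}+a=\frac{(b-a)\ell_\lambda(a)}{\ell_\lambda(b)-\ell_\lambda(a)}\ge0$ — and $\frac{\ell_\lambda(a)}{a+m}<\frac{\ell_\lambda(b)}{b+m}$, the latter obtained by clearing the positive denominators $0<a+m<b+m$. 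By the displayed formula this means that, up to a null set, $k_{\lambda\ast\delta_m}$ is strictly larger near $b+m$ than near $a+m$ while $0<a+m<b+m$, which is impossible for any version non-increasing on $(0,\infty)$; this contradicts Proposition~\ref{prop:SD}. Part~\ref{item:minus} is the mirror image on $(-\infty,0)$: the bound on $m$ now forces $c+m<d+m<0$ together with $\frac{\ell_\lambda(c)}{|c+m|}>\frac{\ell_\lambda(d)}{|d+m|}$, contradicting non-decrease of $k_{\lambda\ast\delta_m}$ on $(-\infty,0)$. Part~\ref{item:flat} is immediate from the same formula: if $\ell_\lambda$ equals a constant $c$ on $\R\setminus\{m_\lambda\}$ then $k_{\lambda\ast\delta_m}(t)=c/|t|$ for a.e.\ $t$, which is a version that is non-decreasing on $(-\infty,0)$ and non-increasing on $(0,\infty)$; as the Boolean L\'evy measure of $\lambda\ast\delta_m$ is Lebesgue absolutely continuous by Lemma~\ref{lem:shift}, Proposition~\ref{prop:SD} gives $\lambda\ast\delta_m\in L(\uplus)$.

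For part~\ref{item:Gaussian} I would prove the contrapositive: assuming $\lambda\ast\delta_m\in L(\uplus)$ for some $m\ne0$ — so that its Boolean L\'evy measure is Lebesgue absolutely continuous by Proposition~\ref{prop:SD} — I would show $\lambda\in\calQ$, i.e.\ conditions \ref{item:H1} and \ref{item:H2}. For \ref{item:H1}: if the Boolean Gaussian component $a$ of $\lambda$ were positive, then by \eqref{eq:F} the function $F_\lambda$ carries the summand $-a/z$, so $F_{\lambda\ast\delta_m}(z)=F_\lambda(z-m)$ has a simple pole at $z=m\ne0$, and this pole is not cancelled because the Boolean L\'evy measure of $\lambda$ does not charge $0$; comparing with \eqref{eq:F} written for $\lambda\ast\delta_m$, such a pole forces an atom of the measure $|t|^2\nu_{\lambda\ast\delta_m}({\rm d}t)$, hence of $\nu_{\lambda\ast\delta_m}$, at $m$, contradicting absolute continuity. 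Granting \ref{item:H1}, for \ref{item:H2} I would identify, again from \eqref{eq:F}, the Boolean L\'evy measure of $\lambda\ast\delta_m$ as the image of $|t|^2\nu_\lambda({\rm d}t)$ under $t\mapsto t-m$, and transfer its absolute continuity and the right-continuous-off-one-point shape of its $\ell$-density back to $\lambda$. I expect this last step to be the main obstacle: one must track carefully how a possible singular part of $\nu_\lambda$ is displaced by the translation and, in particular, control the borderline case in which mass located at $-m$ reappears after shifting as a Boolean Gaussian component at $0$ rather than as a singular L\'evy part — it is there that absolute continuity of the shifted L\'evy measure and membership of $\lambda$ in $\calQ$ could otherwise fail to match up.
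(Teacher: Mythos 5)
Parts \eqref{item:plus}, \eqref{item:minus} and \eqref{item:flat} of your proposal are correct and essentially reproduce the paper's own argument: Lemma \ref{lem:shift} gives $k_{\lambda\ast\delta_m}(t)=\ell_\lambda(t-m)/|t|$ a.e., your algebra showing that $m>\frac{b\ell_\lambda(a)-a\ell_\lambda(b)}{\ell_\lambda(b)-\ell_\lambda(a)}$ forces $0<a+m<b+m$ and $\frac{\ell_\lambda(a)}{a+m}<\frac{\ell_\lambda(b)}{b+m}$ is exactly the paper's inequality \eqref{eq:shift_k}, and you correctly invoke right continuity at $a$ and $b$ to turn a two-point violation into one on sets of positive Lebesgue measure, which is what rules out every a.e.\ version. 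Your direct verification of \eqref{item:flat} via Proposition \ref{prop:SD} is a harmless variant of the paper's observation that $\lambda\ast\delta_m$ is then a point mass or a Cauchy distribution.

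The gap is in part \eqref{item:Gaussian}, exactly where you flagged it, and you should know that the obstacle is not a trick you failed to find: the step genuinely fails. Take $\lambda=\tfrac12(\delta_0+\delta_{-2})$, i.e.\ the Bernoulli law $\tfrac12(\delta_1+\delta_{-1})$ shifted by $-1$. Then $F_\lambda(z)=(z+1)-\tfrac1{z+1}$, whose Nevanlinna representing measure is $\delta_{-1}$; since this puts no mass at $0$, the Boolean Gaussian component of $\lambda$ is zero, so \ref{item:H1} holds, but the Boolean L\'evy measure of $\lambda$ is the atom $\delta_{-1}$, so \ref{item:H2} fails and $\lambda\notin\calQ$. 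Nevertheless $\lambda\ast\delta_1=\tfrac12(\delta_1+\delta_{-1})$ is the Boolean Gaussian $B(0,1)$ and lies in $L(\uplus)$ by Example \ref{ex:twopointmeasure_BSD}. This is precisely the ``borderline case'' you describe: an atom of the representing measure of $\lambda$ sitting at $-m$ is converted by the shift into a Boolean Gaussian component of $\lambda\ast\delta_m$, which Proposition \ref{prop:SD} does not forbid. Hence the implication ``\ref{item:H1} together with $\lambda\ast\delta_{m}\in L(\uplus)$ implies \ref{item:H2}'' is false as stated, and part \eqref{item:Gaussian} cannot be proved without amending it (e.g.\ by additionally requiring that $\lambda\ast\delta_m$ have zero Boolean Gaussian component, which restores the hypotheses of Lemma \ref{lem:shift} for the backward shift by $-m$). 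For what it is worth, the paper's proof has the same defect: it applies Lemma \ref{lem:shift} to $\lambda\ast\delta_{m_0}$ with shift $-m_0$, which requires $\lambda\ast\delta_{m_0}$ to satisfy condition \ref{item:C}, in particular to have zero Boolean Gaussian component --- a property that membership in $L(\uplus)$ does not supply. Your treatment of the \ref{item:H1} half of \eqref{item:Gaussian} (a positive Gaussian component of $\lambda$ produces the atom $am^{-2}\delta_m$ in the L\'evy measure of $\lambda\ast\delta_m$, as in \eqref{eq:F5}) is fine and matches the paper; only the \ref{item:H2} half is irreparable in its present form.
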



\begin{proof}
\eqref{item:Gaussian}  Suppose first that $\lambda$ does not satisfy \ref{item:H1}, i.e.\ it has a positive Boolean Gaussian component $a$.  
Similarly to \eqref{eq:F2}, we obtain 
\begin{align}
F_{\lambda\ast \delta_m}(z) &=z-b_m + \frac{a}{m-z}+\int_{\R\setminus\{0\}} \left( \frac{1}{t-z}- \frac{t}{1+t^2}\right) (t-m)^2 {\rm d}\nu(t-m)  \label{eq:F5}
\end{align}
for some $b_m \in \R$, where $\nu$ is the Boolean L\'evy measure of $\lambda$.  
This implies that the Boolean L\'evy measure of $\lambda \ast \delta_m$ is given by $a m^{-2}\delta_m + t^{-2}(t-m)^2{\rm d}\nu(t-m)$ as soon as $m\ne0$. Because this is not Lebesgue absolutely continuous, $\lambda\ast \delta_m$ is not Boolean selfdecomposable. 

To complete the proof of \eqref{item:Gaussian}, it suffices to prove that if $\lambda$ satisfies \ref{item:H1} and $\lambda \ast \delta_{m_0} \in L(\uplus)$ for some $m_0 \in \R$ then $\lambda$ satisfies \ref{item:H2}.   
According to Lemma \ref{lem:shift}, $\lambda$ has a Lebesgue absolutely continuous Boolean L\'evy measure and 
\begin{equation}\label{eq:spectral_shift}
\ell_\lambda(t)= \ell_{(\lambda \ast \delta_{m_0}) \ast \delta_{-m_0}}(t) = \ell_{\lambda \ast \delta_{m_0}}(t+m_0) = |t+m_0| k_{\lambda\ast \delta_{m_0}}(t+m_0), \qquad \text{a.e.~} t \in \R \setminus\{0\}. 
\end{equation}
Because $k_{\lambda\ast \delta_{m_0}}$ is unimodal with mode 0, it has a version that is right continuous on $\R\setminus\{0\}$. Therefore, $\ell_\lambda$ has a version that is right continuous on $\R\setminus\{-m_0\}$, i.e.\ \ref{item:H2} holds.

\vspace{2mm}\noindent
\eqref{item:plus} For all $m > M:= \frac{b\ell_\lambda(a) - a \ell_\lambda(b)}{\ell_\lambda(b) -\ell_\lambda(a)}$, observe that $0< m+a < m+b$ and, by Lemma \ref{lem:shift}, 
\begin{align}
k_{\lambda\ast \delta_m}(m+a) - k_{\lambda\ast \delta_m}(m+b) = \frac{\ell_\lambda(a)}{m+a} - \frac{\ell_\lambda(b)}{m+b}  <0.   \label{eq:shift_k}  
\end{align}
 Because $\ell_\lambda$ is right continuous at $a$ and $b$, the inequality \eqref{eq:shift_k} still holds when $a, b$ are replaced with $a+\epsilon, b+\epsilon$ for sufficiently small $\epsilon>0$, respectiely; hence, for any $m>M$,  $k_{\lambda\ast \delta_m}$ does not have a version that is non-increasing on  $(0,\infty)$, and consequently  $\lambda\ast\delta_m$ is not in $L(\uplus)$. The assertion \eqref{item:minus} is similar. 

\vspace{2mm}\noindent
\eqref{item:flat} The distribution $\lambda\ast\delta_m$ is then a delta measure (when $\ell_\lambda =0$) or Cauchy distribution (when $\ell_\lambda$ is a positive constant function) for any $m \in \R$, which is Boolean selfdecomposable.  
\end{proof}

\begin{rem} In case \eqref{item:plus} it might happen that $\lambda \ast \delta_m$ is in $L(\uplus)$ for all sufficiently small $m$. To detect such an example, the function $\ell_\lambda$ needs to be non-decreasing on $\R$; otherwise case \eqref{item:minus} would apply. Let $\lambda$ be a probability measure with Boolean L\'evy triplet $(0, k_\lambda(t){\,\rm d}t, 0)$, where
\[
k_\lambda(t)= \frac{1}{|t|}\left[ \arctan t + \frac{\pi}{2} \right] ,\qquad t \in \R\setminus\{0\}.  
\]
 It can be checked by calculus that the function 
\[
k_{\lambda \ast \delta_m}(t) := \frac{1}{|t|} \left[ \arctan (t-m) + \frac{\pi}{2} \right] ,\qquad t \in \R\setminus\{0\} 
\]
is unimodal with mode $0$ if and only if $m \le \pi/2$, and hence $\lambda \ast \delta_m \in L(\uplus)$ for any such $m$.  
\end{rem}

\subsection{Several examples of Boolean selfdecomposable distributions}

 We observe distributional properties of Boolean selfdecomposable distributions through several examples. 

\begin{ex}\label{ex:SD}
  We give several examples in the class $L(\uplus)$ as follows.
\begin{enumerate} 
\item The free Poisson (or Marchenko-Pastur) distribution $\MP_\lambda$ is a probability measure defined by
\begin{equation*}
\max\{1-\lambda,0\} \delta_0 + \frac{1}{2\pi x}\sqrt{((1+\sqrt{\lambda})^2-x)(x-(1-\sqrt{\lambda})^2)} \cdot \,\mathbf{1}_ {((1-\sqrt{\lambda})^2,(1+\sqrt{\lambda})^2)}(x){\,\rm d}x, \qquad \lambda>0. 
\end{equation*}
Its F-transform is given by 
\[
F_{\MP_\lambda}(z)=\frac{z+1-\lambda+\sqrt{(z-(1-\sqrt{\lambda})^2)(z-(1+\sqrt{\lambda})^2)}}{2}, \qquad z\in \C^+,
\]
where the square root $\sqrt{w}$ is defined continuously on angles $\arg w \in(0,2\pi)$. 
By Propositions \ref{kdensity2} and \ref{kdensity}, the Boolean L\'evy measure is Lebesgue absolutely continuous and the function $k_{\MP_\lambda}$ is given by
\[
k_{\MP_\lambda}(x)=\frac{\sqrt{(x-(1-\sqrt{\lambda})^2)((1+\sqrt{\lambda})^2-x)} }{2\pi x}\cdot \mathbf{1}_{( (1-\sqrt{\lambda})^2, (1+\sqrt{\lambda})^2  )} (x),
\]
and therefore $\MP_\lambda \in L(\uplus)$ if and only if $\lambda=1$. Consequently, $L(\uplus)$ is not closed under the free convolution since for example $\MP_1\boxplus \MP_1=\MP_2\notin L(\uplus)$.

\item Let $S(m,\sigma^2)$ be the Wigner's semicircle law, that is, its probability density function is defined by
$$
\frac{1}{2\pi \sigma^2} \sqrt{4\sigma^2-(x-m)^2} \cdot \mathbf{1}_{[m-2\sigma,m+2\sigma]}(x), \qquad m\in\R, \hspace{2mm} \sigma>0.
$$
Its F-transform is known to be
$$
F_{S(m,\sigma^2)}(z)=\frac{z-m+\sqrt{(z-m)^2-4\sigma^2}}{2}, \qquad z\in \C^+.
$$
By Proposition \ref{kdensity2} the Boolean L\'evy measure is Lebesgue absolutely continuous and by Proposition \ref{kdensity} we have 
$$
k_{S(m,\sigma^2)}(x)=\frac{\sqrt{4\sigma^2-(x-m)^2}}{2\pi |x|}\cdot \mathbf{1}_{[m-2\sigma,m+2\sigma] \setminus\{0\} }(x),
$$
and therefore $S(m,\sigma^2)\in  L(\uplus)$ if and only if $m-2\sigma \le 0 \le m+2\sigma$.


\item\label{item:Kesten} The measure $\bern^{\boxplus t}$ is given by
\[
\max\left\{\frac{2-t}{2},0\right\} (\delta_{t}+\delta_{-t})+ \frac{1}{2\pi} \cdot \frac{t\sqrt{4(t-1)-x^2}}{t^2-x^2}\cdot \mathbf{1}_{[-2\sqrt{t-1},2\sqrt{t-1}]}(x){\,\rm d}x, \qquad t>1.
\]
It is called the Kesten distribution. The F-transform of $\bern^{\boxplus t}$ is given by
\[
F_{\bern^{\boxplus t}}(z)=\frac{(t-2)z+t\sqrt{z^2-4(t-1)}}{2(t-1)}, \qquad z\in \C^+.
\]
By Propositions \ref{kdensity2} and \ref{kdensity}, the Boolean L\'evy measure is Lebesgue absolutely continuous and 
\[
k_{\bern^{\boxplus t}}(x)=\frac{t\sqrt{4(t-1)-x^2}}{2\pi (t-1)|x|}\cdot \mathbf{1}_{[-2\sqrt{t-1},2\sqrt{t-1} ]\setminus\{0\}}(x).
\]
Because $k_{\bern^{\boxplus t}}$ is unimodal with mode $0$ for all $t>1$,  we conclude that $\bern^{\boxplus t} \in L(\uplus)$ for all $t>1$. 

\item\label{item:B} Let $\mu(p,r)$ be the 2-parameter Fuss-Catalan distribution. Recall that $\mu(p,r)\in I(\boxplus)$ if and only if either $0<r\le \min\{p/2,p-1\}$ or $1\le p=r\le 2$ (see \cite[Theorem 4.1]{MSU20}). Then we can define $\widetilde{\mu}(p,r):= \Lambda_B^{-1} (\mu(p,r))$ for such pairs $(p,r)$. Since $\mu(p,r) \in  L(\boxplus)$ if and only if $1\le p=r\le 2$ (see \cite[Theorem 4.3]{MSU20}) and \eqref{eq:BP_Lclasses}, we have $\widetilde{\mu}(p,p)\in  L(\uplus)$ for $1\le p\le2$. In particular, $\widetilde{\mu}(2,2)=B(2,1)$ is a Boolean Gaussian distribution. By Lemma \ref{lem:BFBP} (i) and \cite[Proposition 3.6]{MSU20}, for $1 \le p <2$ the Boolean L\'evy triplet of $\widetilde{\mu}(p,p)$ is given by $(0,  |x|^{-1}k_{\mu(p,p)}(x)\,{\rm d}x, p)$, 
where
\begin{align*}
k_{\mu(p,p)}(x) =- \frac{\sin(p \pi)}{\pi}\left(\frac{1+x}{-x}\right)^{p}\cdot \mathbf{1}_{(-1,0)}(x).
\end{align*}

From computations similar to \cite[Proposition 3.6]{MSU20}, we obtain
\begin{align*}
\eta_{\widetilde{\mu}(p,p)}(z) = (1 + z)^{p}-1, \qquad z\in \C^-.
\end{align*}
Then its Cauchy transform is as follows:
\begin{align*} 
G_{\widetilde{\mu}(p,p)}(z) = \frac{1}{z[2-(1 + z^{-1})^{p}]} ,\qquad z \in \C^{+}.
\end{align*}
By the Stieltjes inversion formula, the measure $\widetilde{\mu}(p,p)$ for $1 \le p <2$ is given by the following explicit form:
\begin{align*}
\frac1{p(2-2^{1-1/p})}\delta_{(2^{1/p}-1)^{-1}}
+\frac{\sin(p\pi)\left|\frac{1+x}{x}\right|^{p}}{\pi x(4 - 4\cos(p\pi) \left|\frac{1+x}{x}\right|^{p} + 
      \left|\frac{1+x}{x}\right|^{2p})}\mathbf{1}_{(-1,0)}(x)\, \dd x.
\end{align*}
According to the example, the discrete parts of Boolean selfdecomposable distributions come from not only Boolean Gaussian component but also its Boolean L{\'e}vy measure. 
\end{enumerate}
 \end{ex}

\section{Boolean selfdecomposability for normal distributions}

In this section, we investigate Boolean selfdecomposability for normal distributions $N(m,v)$. The question is unimodality of $k_{N(m,v)}$ with mode $0$. It turns out that $k_{N(m,v)}$ has an explicit formula. To compute it, we make use of  the following differential equation (see \cite[(8.1.6)]{Kerov98}) which can be derived by integration by parts. 

\begin{lem}\label{lem:ODE}
The following differential equation holds:
\[
G_{\ND}'(z)=1-zG_{\ND}(z), \qquad z\in \C^+.
\]
\end{lem}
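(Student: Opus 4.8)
The target is the ODE $G_{N(0,1)}'(z) = 1 - z\,G_{N(0,1)}(z)$ on $\C^+$. The plan is to prove it first for purely imaginary or at least for a large subset of $\C^+$ by direct computation with the Gaussian density, and then extend to all of $\C^+$ by analyticity. Write $\phi(x) = (2\pi)^{-1/2} e^{-x^2/2}$ for the standard normal density, so that $G(z) = G_{N(0,1)}(z) = \int_\R \frac{\phi(x)}{z-x}\,{\rm d}x$ for $z \in \C^+$. Since the integrand is bounded by an integrable function locally uniformly in $z \in \C^+$, differentiation under the integral sign is justified and gives $G'(z) = -\int_\R \frac{\phi(x)}{(z-x)^2}\,{\rm d}x$.

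The key step is integration by parts, exploiting the Gaussian identity $\phi'(x) = -x\,\phi(x)$. First I would write
\[
G'(z) = -\int_\R \frac{\phi(x)}{(z-x)^2}\,{\rm d}x = -\int_\R \phi(x)\,\frac{{\rm d}}{{\rm d}x}\!\left(\frac{1}{z-x}\right){\rm d}x,
\]
and then integrate by parts. The boundary terms $\left[\frac{\phi(x)}{z-x}\right]_{-\infty}^{\infty}$ vanish because $\phi$ decays faster than any polynomial while $\frac{1}{z-x}$ grows only polynomially (indeed $|z-x| \geq \Im[z] > 0$). This yields
\[
G'(z) = \int_\R \frac{\phi'(x)}{z-x}\,{\rm d}x = -\int_\R \frac{x\,\phi(x)}{z-x}\,{\rm d}x.
\]
Now rewrite $\frac{x}{z-x} = \frac{x - z + z}{z-x} = -1 + \frac{z}{z-x}$, so that
\[
G'(z) = \int_\R \phi(x)\,{\rm d}x - z\int_\R \frac{\phi(x)}{z-x}\,{\rm d}x = 1 - z\,G(z),
\]
using $\int_\R \phi(x)\,{\rm d}x = 1$. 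This establishes the identity for every $z \in \C^+$ directly, so no analytic continuation is even needed; one could alternatively verify it on $i(0,\infty)$ and invoke the identity theorem for the holomorphic functions $G'(z)$ and $1 - zG(z)$ on the connected open set $\C^+$.

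The main obstacle, such as it is, is purely a matter of rigor rather than depth: one must carefully justify (i) the interchange of differentiation and integration defining $G'$, (ii) the absolute convergence of all integrals appearing, and (iii) the vanishing of the boundary terms in the integration by parts. All three follow from the rapid decay of $\phi$ together with the uniform lower bound $|z - x| \geq \Im[z]$ on the relevant compact subsets of $\C^+$; I would state these bounds once and apply dominated convergence. No genuinely hard analysis arises, which is consistent with the lemma being quoted from \cite{Kerov98} as a routine integration-by-parts fact.
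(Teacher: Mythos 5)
Your proof is correct and follows exactly the route the paper indicates: the paper gives no written proof, merely citing Kerov's (8.1.6) and remarking that the identity "can be derived by integration by parts," which is precisely your computation using $\phi'(x)=-x\,\phi(x)$ and the decomposition $\tfrac{x}{z-x}=-1+\tfrac{z}{z-x}$. The justifications you supply (differentiation under the integral, vanishing boundary terms via $|z-x|\ge \Im[z]$) are the standard ones and are all that is needed.
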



Using the above lemma, we can explicitly compute the functions $G_{N(0,1)}$ and $k_{\ND}$. 

\begin{prop}\label{prop:k} Let $h(z) = \int_0^z e^{t^2}\dd t, z \in \C$. Then 
\begin{equation}\label{eq:CauchyN}
G_{\ND}(z) = e^{- \frac{1}{2}z^2 } \left[ -i \sqrt{\frac{\pi}{2}}   + \sqrt{2}   h\left(\frac{z}{\sqrt{2}}\right) \right], \qquad z \in \C^+
\end{equation}
and the Boolean L\'evy triplet for $N(0,1)$ is $(0,k_{\ND}(x)\,{\rm d}x, 0)$, where 
\begin{align}\label{eq:normal_k}
k_{\ND}(x)=\frac{1}{\pi}\sqrt{\frac{2}{\pi}} \cdot \frac{1}{|x|e^{-\frac{x^2}{2}} \left[ 1 + \frac{4}{\pi} h\left(\frac{x}{\sqrt{2}}\right)^2\right] } \mathbf{1}_{\R\setminus\{0\}}(x).  
\end{align}
\end{prop}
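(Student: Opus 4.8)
The plan is to solve the linear ODE from Lemma~\ref{lem:ODE} explicitly on $\C^+$ and then extract $k_{\ND}$ via the Stieltjes-type formula of Proposition~\ref{kdensity}. First I would rewrite Lemma~\ref{lem:ODE} as $G_{\ND}'(z) + z G_{\ND}(z) = 1$, recognize $e^{z^2/2}$ as an integrating factor, and conclude that $\bigl(e^{z^2/2} G_{\ND}(z)\bigr)' = e^{z^2/2}$. Integrating along a path in $\C^+$ gives $e^{z^2/2} G_{\ND}(z) = C + \int_{z_0}^z e^{t^2/2}\,\dd t$ for a constant $C$; a change of variable $t \mapsto \sqrt{2}\,t$ turns the integral into $\sqrt{2}\,h(z/\sqrt 2)$ up to the lower endpoint, so $G_{\ND}(z) = e^{-z^2/2}\bigl[C' + \sqrt 2\, h(z/\sqrt 2)\bigr]$. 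The constant $C'$ is then pinned down by the normalization $z G_{\ND}(z) \to 1$ as $z \to \infty$ nontangentially in $\C^+$ (equivalently, by matching the known asymptotics of the Cauchy transform of a probability measure); since $h$ is an odd entire function, the standard asymptotic expansion of $h$ along the imaginary axis forces $C' = -i\sqrt{\pi/2}$, which yields \eqref{eq:CauchyN}. One should double-check analyticity: \eqref{eq:CauchyN} defines an analytic function on all of $\C$, and it agrees with $G_{\ND}$ on $\C^+$ by uniqueness of solutions of the ODE together with the matched asymptotics.

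Next, for the L\'evy triplet, by Proposition~\ref{kdensity} it suffices to compute $\lim_{\epsilon\to 0^+}\frac{1}{\pi|x|}\Im[F_{\ND}(x+i\epsilon)]$ for a.e.\ $x \in \R\setminus\{0\}$ and to check the Gaussian component vanishes. Since \eqref{eq:CauchyN} extends analytically across the real line, the boundary values are just obtained by setting $z=x\in\R$: there $h(x/\sqrt 2)$ is real, so $G_{\ND}(x) = e^{-x^2/2}\bigl[-i\sqrt{\pi/2} + \sqrt 2\, h(x/\sqrt 2)\bigr]$, and hence
\begin{align*}
\Im[F_{\ND}(x)] = \Im\left[\frac{1}{G_{\ND}(x)}\right] = -\frac{\Im[G_{\ND}(x)]}{|G_{\ND}(x)|^2} = \frac{e^{-x^2/2}\sqrt{\pi/2}}{e^{-x^2}\bigl[\pi/2 + 2\, h(x/\sqrt 2)^2\bigr]}.
\end{align*}
Dividing by $\pi|x|$ and simplifying $\frac{\sqrt{\pi/2}}{\pi/2 + 2h^2} = \frac{\sqrt{2/\pi}}{1 + (4/\pi)h^2}$ gives exactly \eqref{eq:normal_k}. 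For the Gaussian component, Proposition~\ref{kdensity} gives $a = -\lim_{\epsilon\to0^+} i\epsilon F_{\ND}(i\epsilon) = -\lim_{\epsilon\to0^+} i\epsilon/G_{\ND}(i\epsilon)$; since $G_{\ND}(0) = -i\sqrt{\pi/2}\neq 0$ by \eqref{eq:CauchyN}, this limit is $0$, so $a=0$. Finally the drift $\gamma$ is read off from \eqref{relation}: by symmetry of $N(0,1)$ the function $k_{\ND}$ is even, so $\nu$ is symmetric and $\int_\R x(\mathbf 1_{[-1,1]}(x) - (1+x^2)^{-1})\nu(\dd x) = 0$, while $b$ vanishes because $F_{\ND}(i\epsilon) - i\epsilon \to 0$ forces $b=0$ in \eqref{eq:F}; hence $\gamma = 0$.

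The main obstacle I anticipate is the constant-fixing step: justifying rigorously that $C' = -i\sqrt{\pi/2}$ requires either a careful asymptotic analysis of $h(z/\sqrt 2)$ as $\Im z \to \infty$ (using $\int_0^{iR} e^{-s^2/2}\,\dd s \to i\sqrt{\pi/2}$, i.e.\ a rotated Gaussian integral, being careful about which branch/direction of integration is used and that $e^{z^2/2}G_{\ND}(z)$ must stay bounded), or an appeal to the known value $G_{\ND}(0^+\! i) $ via $\int_\R \frac{1}{-x}\cdot\frac{1}{\sqrt{2\pi}}e^{-x^2/2}\,\dd x$ interpreted as a principal value plus the $i\pi$ contribution — but a cleaner route is simply: the right-hand side of \eqref{eq:CauchyN} is analytic on $\C$, satisfies the ODE, and is $O(1/z)$ nontangentially iff $C' = -i\sqrt{\pi/2}$ (any other constant makes $e^{-z^2/2}\cdot(\text{const})$ blow up along $i\R_+$ relative to $h$, which itself decays there). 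I would present this boundedness/growth dichotomy as the argument. Everything after that is bookkeeping with the Stieltjes inversion formula, which is routine given Proposition~\ref{kdensity}.
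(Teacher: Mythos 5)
Your overall strategy coincides with the paper's: integrate the ODE of Lemma \ref{lem:ODE} with the factor $e^{z^2/2}$, fix the constant of integration, and read off $k_{\ND}$ from the boundary values of $F_{\ND}$ via Proposition \ref{kdensity}. The only genuinely different step is how you pin down the constant: the paper uses the symmetry of $N(0,1)$ to force $\Re[C]=0$ and then matches the density through Stieltjes inversion to get $\Im[C]=-\sqrt{\pi/2}$, whereas you impose $zG_{\ND}(z)\to1$ along $i\R_+$ together with $\sqrt2\,h(iy/\sqrt2)\to i\sqrt{\pi/2}$. Your growth dichotomy is sound, but your parenthetical ``$h$, which itself decays there'' is inaccurate: $h$ does not decay on $i\R_+$; it is $h(iy/\sqrt2)-i\sqrt{\pi}/2$ that decays (like $e^{-y^2/2}/y$), and that finer asymptotic is what you actually need to confirm $zG_{\ND}(z)\to1$ rather than merely $G_{\ND}(iy)\to0$. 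Both routes are legitimate; the paper's is slightly cheaper because it never needs the tail asymptotics of $h$.

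Two points need repair. First, your derivation of $b=0$ rests on the claim that $F_{\ND}(i\epsilon)-i\epsilon\to0$, which is false: by your own formula $G_{\ND}(0)=-i\sqrt{\pi/2}$, hence $F_{\ND}(i\epsilon)\to i\sqrt{2/\pi}\neq0$. The conclusion is still correct, and the fix is the paper's argument: by symmetry $G_{\ND}(iy)$ is purely imaginary for $y>0$, so $\Re[F_{\ND}(i)]=0$, and \eqref{eq:K1} gives $b=\Re[K_{\ND}(i)]=-\Re[F_{\ND}(i)]=0$. (Alternatively, take real parts in \eqref{eq:F} at $z=i\epsilon$ and use that the integrand's real part is odd in $t$ against the symmetric measure.) Second, a smaller omission: to assert that the Boolean L\'evy triplet is exactly $(0,k_{\ND}(x)\,\mathrm{d}x,0)$ you must rule out a singular part of the Boolean L\'evy measure, not just compute the density of its absolutely continuous part. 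This follows from Proposition \ref{kdensity2} with $C=\{0\}$, since $G_{\ND}$ extends continuously to $\R$ and never vanishes there ($\Im[G_{\ND}(x)]=-e^{-x^2/2}\sqrt{\pi/2}<0$), so $\Im[F_{\ND}(x+i\epsilon)]$ has a finite limit for every $x\in\R$; you should state this explicitly. Everything else, including the algebra leading to \eqref{eq:normal_k} and the vanishing of the Gaussian component and of $\gamma$, is correct.
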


\begin{proof} 
The standard variation of constants method for the differential equation in Lemma \ref{lem:ODE} yields the solution
\begin{equation*}
G_{\ND}(z) = e^{- \frac{1}{2}z^2 } \left[ C  + \int_0^z e^{\frac1{2}t^2} \,{\rm d}t\right], \qquad z \in \C^+ 
\end{equation*}
for some $C \in \C$. Because $N(0,1)$ is symmetric about 0, $\Re[G_{N(0,1)}(i y)] =0$ for all $y >0$ and so $\Re[C]=0$. In order that the density of $N(0,1)$ is obtained as the limit $(-1/\pi)\lim_{\epsilon\to0^+} G_{\ND}(x+i\epsilon)$, we must have $\Im [C] = - \sqrt{\frac{\pi}{2}}$, concluding the desired formula \eqref{eq:CauchyN}. Because $F_{N(0,1)}$ has a continuous extension to $\C^+\cup \R$, the Stieltjes inversion \eqref{eq:K2} implies that the Boolean Gaussian component is zero and the Boolean L\'evy measure is Lebesgue absolutely continuous.  
By Proposition \ref{kdensity} we have \eqref{eq:normal_k}. By \eqref{eq:K1} and $\Re[F_{N(0,1)}(i)]=0$ we obtain $b=0$. Finally, because $\tau({\rm d}x) = \frac{|x| k_{N(0,1)}(x)}{1+x^2}\, {\rm d}x$ is symmetric about zero, \eqref{relation} yields $\gamma=0$. 
\end{proof}

Analyzing the function $k_{N(0,1)}$ we are able to demonstrate the following. 
\begin{thm}\label{thm:Normal}
For all $v>0$, we have $N(0, v)\in L(\uplus)$.
\end{thm}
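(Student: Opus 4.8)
The plan is to reduce to the standard case $v=1$ and then apply Proposition~\ref{prop:SD}. By scaling, $N(0,v) = D_{\sqrt{v}}(N(0,1))$, and the class $L(\uplus)$ is stable under the dilations $D_s$ with $s>0$: since $D_s$ is a homomorphism for $\uplus$ and commutes with every $D_c$, the identity $\mu = D_c(\mu)\uplus\mu_c$ gives $D_s(\mu) = D_c(D_s\mu)\uplus D_s(\mu_c)$ (this also follows from Lemma~\ref{lem:BFBP}). Hence it suffices to prove $N(0,1)\in L(\uplus)$. By Proposition~\ref{prop:k} the Boolean L\'evy measure of $N(0,1)$ is Lebesgue absolutely continuous and $k_{\ND}$ is given by \eqref{eq:normal_k}, so by Proposition~\ref{prop:SD} everything reduces to showing that $k_{\ND}$ has a version that is unimodal with mode $0$. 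As $k_{\ND}$ is an even function, this amounts to showing that $k_{\ND}$ is non-increasing on $(0,\infty)$.

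Equivalently, $1/k_{\ND}$ is non-decreasing on $(0,\infty)$; writing $u = x/\sqrt 2$ and discarding positive constant factors, \eqref{eq:normal_k} shows this is the same as the assertion that
\[
\chi(u) := u\,e^{-u^2}\!\left(\frac{\pi}{2} + 2h(u)^2\right), \qquad h(u) := \int_0^u e^{t^2}\,\dd t,
\]
is non-decreasing on $(0,\infty)$. Using $h'(u) = e^{u^2}$ one computes
\[
\chi'(u) = e^{-u^2}(1 - 2u^2)\!\left(\frac{\pi}{2} + 2h(u)^2\right) + 4u\,h(u),
\]
which is clearly positive on $(0, 1/\sqrt 2\,]$ (both summands are then nonnegative, and $\chi'(0^+)=\pi/2$). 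For $u > 1/\sqrt 2$, multiplying by $e^{u^2}$ shows that $\chi'(u)\ge 0$ is equivalent to
\[
4u\,h(u)\,e^{u^2} \ge (2u^2-1)\!\left(\frac{\pi}{2} + 2h(u)^2\right).
\]

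I expect this last inequality to be the main obstacle, since the naive estimates for $h(u)\,e^{u^2}$ (such as $h(u) < e^{u^2}/(2u)$) point in the wrong direction. The device I would use is the identity $e^{u^2}-1 = 2u\,h(u) - 2\int_0^u h(t)\,\dd t$, obtained by integrating $\int_0^u 2t\,e^{t^2}\,\dd t$ by parts, together with the convexity of $h$ on $[0,\infty)$ (note $h''(t) = 2t\,e^{t^2}\ge 0$), which yields $\int_0^u h(t)\,\dd t \le \tfrac{u}{2}h(u)$ and therefore $e^{u^2}\ge 1+u\,h(u)$. Then $4u\,h(u)\,e^{u^2}\ge 4u\,h(u)+4u^2 h(u)^2$, while the right-hand side of the displayed inequality equals $\pi u^2 - \tfrac{\pi}{2} + 4u^2 h(u)^2 - 2h(u)^2$; so it remains only to check $4u\,h(u)+2h(u)^2 \ge \pi(u^2-\tfrac12)$, which is immediate from $h(u)\ge u$ (as $e^{t^2}\ge 1$), since then the left side is at least $6u^2 > \pi u^2 \ge \pi(u^2-\tfrac12)$. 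This shows $\chi'>0$ on all of $(0,\infty)$, hence $k_{\ND}$ is strictly decreasing on $(0,\infty)$ and, by symmetry, unimodal with mode $0$; Proposition~\ref{prop:SD} then gives $N(0,1)\in L(\uplus)$, and the scaling argument completes the proof.
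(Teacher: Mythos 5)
Your proof is correct, and while the overall strategy coincides with the paper's (reduce to $\ND$ by dilation-invariance of $L(\uplus)$, take the explicit formula for $k_{\ND}$ from Proposition~\ref{prop:k}, and verify unimodality with mode $0$ by differentiation), the calculus at the heart of the argument is genuinely different and in fact lighter. The paper proves the stronger statement that $\ell_{\ND}(x)=|x|k_{\ND}(x)=c/f(x)$ is non-increasing on $(0,\infty)$, i.e.\ that $f$ in \eqref{eq:f} is non-decreasing; after differentiating, this forces $h(x)$ to lie \emph{between} the two roots of a quadratic (the two-sided bound \eqref{eqh}), and the lower bound requires a separate, somewhat delicate argument with the auxiliary function $J$. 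You instead attack the weaker but sufficient statement that $k_{\ND}$ itself is non-increasing, equivalently that $\chi(u)=u\,e^{-u^2}\bigl(\tfrac{\pi}{2}+2h(u)^2\bigr)$ is non-decreasing; this only requires a one-sided lower bound on $4u\,h(u)e^{u^2}$, and the single inequality you need,
\begin{equation*}
e^{u^2}\ \ge\ 1+u\,h(u),
\end{equation*}
is exactly the paper's supplementary inequality \eqref{eqh2} in disguise. Your derivation of it via integration by parts ($e^{u^2}-1=2u\,h(u)-2\int_0^u h$) combined with convexity of $h$ is a clean alternative to the paper's monotonicity argument for $H$. After that, your reduction to $4u\,h(u)+2h(u)^2\ge\pi(u^2-\tfrac12)$, settled by $h(u)\ge u$ and $6>\pi$, is elementary; all the algebra checks out. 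What your route gives up is the extra information that $\ell_{\ND}$ (not just $k_{\ND}$) is monotone on each half-line, which the paper records but does not actually need later (the subsequent non-selfdecomposability result for $N(m,1)$ only uses that $\ell_{\ND}$ is symmetric and non-constant). So your argument proves exactly what the theorem requires, with one fewer hard estimate.
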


\begin{proof}
Since $D_{\sqrt{v}} (\ND)=N(0,v)$, it suffices to show that $\ND\in L(\uplus)$. In order to show $\ND \in L(\uplus)$, we prove unimodality for $k_{\ND}$ with mode $0$ using Proposition \ref{prop:k}. 
Because the function $k_{\ND}$ is symmetric with respect to $0$ it is enough to prove that $k_{\ND}(x)$ is non-increasing on $(0,\infty)$. 
For this it suffices to show that $\ell_{N(0,1)}$ is  non-increasing on $(0,\infty)$, i.e.\  the function 
\begin{equation}\label{eq:f}
f(x) = e^{-\frac{x^2}{2}} \left[ 1 + \frac{4}{\pi} h\left(\frac{x}{\sqrt{2}}\right)^2\right]
\end{equation}
is non-decreasing on $(0,\infty)$. For concise calculations, let us instead work on the rescaled function $g(x) = f(\sqrt{2}x) $. 

To begin we compute
\[
g'(x) =  -2x  e^{-x^2} \left[ 1 +\frac{4}{\pi} h(x)^2 \right] + \frac{8}{\pi} h(x) =  -\frac{8x e^{-x^2}}{\pi} \left[ h(x)^2 - \frac{e^{x^2}}{x} h(x) + \frac{\pi}{4}  \right]. 
\]
The desired inequality, $g' \ge0$ on $(0,\infty)$, is therefore equivalent to 
\begin{equation}\label{eqh}
\frac1{2}\left[\frac{e^{x^2}}{x} - \sqrt{\left(\frac{e^{x^2}}{x}\right)^2 - \pi }  \right] \le  h(x) \le \frac1{2}\left[\frac{e^{x^2}}{x} + \sqrt{\left(\frac{e^{x^2}}{x}\right)^2 - \pi }  \right]. 
\end{equation}
Note here that $(e^{x^2}/x)^2 \ge 2e >\pi$ for all $x >0$.

For the upper bound of \eqref{eqh}, we use the following supplementary inequality
\begin{equation}\label{eqh2}
h(x) < \frac{e^{x^2}-1}{x}, \qquad x>0. 
\end{equation}
This can be easily verified with calculus:  the function $H(x) := (e^{x^2}-1)/x- h(x)$  satisfies $H'>0$ on $(0,\infty)$ and $H(+0)=0$. 
Thanks to \eqref{eqh2}, for the upper bound of \eqref{eqh} it suffices to show that 
\[
\frac1{2}\left[\frac{e^{x^2}}{x} + \sqrt{\left(\frac{e^{x^2}}{x}\right)^2 - \pi }  \right] \ge  \frac{e^{x^2}-1}{x},  
\qquad  \text{or equivalently} \qquad 
\sqrt{\left(\frac{e^{x^2}}{x}\right)^2 - \pi }  \ge \frac{e^{x^2} -2}{x}. 
\]
The latter is obvious if $e^{x^2}-2 <0$. In the case $e^{x^2}-2 \ge0$ the desired inequality is equivalent to 
\[
\left(\frac{e^{x^2}}{x}\right)^2 - \pi  \ge \frac{e^{2x^2} - 4 e^{x^2}+4}{x^2}, 
\qquad  \text{or equivalently} \qquad 
4 e^{x^2} - \pi x^2 -4 \ge0. 
\]
By calculus, this is the case. Thus we are done. 

The lower bound of \eqref{eqh} can also be proved by calculus. Let 
\[
J(x) := 2 h(x) - \left[\frac{e^{x^2}}{x} - \sqrt{\left(\frac{e^{x^2}}{x}\right)^2 - \pi }  \right]. 
\] 
It is elementary to see that $J(+0)=0$. It then suffices to show that $J'>0$. To begin, we compute
\[
J'(x) = 2e^{x^2} - \left[-\frac{e^{x^2}}{x^2} + 2e^{x^2} - \frac{\frac{-2e^{2x^2}}{x^3}  + \frac{4e^{2x^2}}{x}  }{2\sqrt{\left(\frac{e^{x^2}}{x}\right)^2 - \pi }}  \right] 
= \frac{e^{2x^2}}{x^3}\left[x e^{-x^2}  + \frac{-1 + 2x^2 }{\sqrt{\left(\frac{e^{x^2}}{x}\right)^2 - \pi }} \right].
\]
Hence $J'$ is obviously positive on $[1/\sqrt{2}, \infty)$. Suppose that $J'(x)=0$ held for some $0< x < 1/\sqrt{2}$. This would imply 
\[
x^2 e^{-2x^2}\left[ \left(\frac{e^{x^2}}{x}\right)^2 - \pi \right]  = (2x^2-1)^2, 
\qquad  \text{or equivalently} \qquad 
\pi e^{-2x^2} +4x^2-4=0. 
\]
However, by some elementary calculus the left hand side of the last equation must be negative, a contradiction. We are done. 
\end{proof}

Next, we find a failure of Boolean selfdecomposablity for normal distributions.

\begin{thm}
There exists $M>0$ such that $|m/\sqrt{v}|> M$ implies that $N(m,v)\notin L(\uplus)$.
\end{thm}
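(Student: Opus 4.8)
The plan is to reduce the assertion to the case $v=1$ and then combine the explicit description of the Boolean L\'evy measure of $N(0,1)$ in Proposition~\ref{prop:k} with the shift result of Theorem~\ref{thm:shift}. First I would record that for every $c>0$ the dilation $D_c$ is a bijection of $\calP(\R)$ which commutes with $\uplus$ (because $K_{D_c\mu}(z)=cK_\mu(z/c)$), hence maps $L(\uplus)$ onto itself; this is already used in the proof of Theorem~\ref{thm:Normal}. Since $N(m,v)=D_{\sqrt v}\bigl(N(m/\sqrt v,1)\bigr)$, it suffices to exhibit $M>0$ with $N(m,1)\notin L(\uplus)$ whenever $|m|>M$. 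Writing $N(m,1)=N(0,1)\ast\delta_m$, this becomes a question about shifting the fixed measure $\lambda:=N(0,1)$.

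Next I would verify that $\lambda=N(0,1)\in\calQ$. By Proposition~\ref{prop:k} the Boolean Gaussian component of $N(0,1)$ is $0$, which is \ref{item:H1}; and its Boolean L\'evy measure is $x^{-2}\ell_{N(0,1)}(x)\mathbf1_{\R\setminus\{0\}}(x)\,\dd x$ with
\[
\ell_{N(0,1)}(x)=|x|\,k_{N(0,1)}(x)=\frac1\pi\sqrt{\frac2\pi}\cdot\frac1{f(x)},
\]
where $f$ is the function in \eqref{eq:f}. As $f$ is continuous and strictly positive on $\R$ with $f(0)=1$, the right-hand side extends to a continuous (hence right continuous) function on all of $\R$, so \ref{item:H2} holds, with $m_\lambda$ arbitrary.

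Then I would exploit the monotonicity of $f$ that is already established inside the proof of Theorem~\ref{thm:Normal}: $f$ is non-decreasing on $(0,\infty)$, and in fact the argument there produces a strict sign for $g'$ on $(0,\infty)$, so $f$ is strictly increasing on $(0,\infty)$ (alternatively, non-constancy of $f$ on $(0,\infty)$ also follows from $f(x)\to\infty$ as $x\to\infty$). Consequently $\ell_{N(0,1)}$ is strictly decreasing on $(0,\infty)$ and, by symmetry of $f$, strictly increasing on $(-\infty,0)$. Choosing any $0<c<d$ with $c,d\neq m_\lambda$ we get $\ell_{N(0,1)}(c)>\ell_{N(0,1)}(d)$, and Theorem~\ref{thm:shift}\eqref{item:minus} then produces the \emph{finite} real number
\[
M_-:=\frac{c\,\ell_{N(0,1)}(d)-d\,\ell_{N(0,1)}(c)}{\ell_{N(0,1)}(c)-\ell_{N(0,1)}(d)}<0
\]
such that $N(m,1)=N(0,1)\ast\delta_m\notin L(\uplus)$ for every $m<M_-$. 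The mirror statement, $N(m,1)\notin L(\uplus)$ for every $m>-M_-$, follows either from Theorem~\ref{thm:shift}\eqref{item:plus} applied on $(-\infty,0)$, or more cheaply from the fact that the reflection $x\mapsto-x$ preserves $L(\uplus)$ and sends $N(m,1)$ to $N(-m,1)$. Taking $M:=|M_-|$ settles the case $v=1$, and composing with $D_{\sqrt v}$ gives the general statement.

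I do not expect a genuine obstacle: the analytic heavy lifting is already contained in Proposition~\ref{prop:k} and in the proof of Theorem~\ref{thm:Normal}. The two points that need care are the membership $N(0,1)\in\calQ$ and the observation that $\ell_{N(0,1)}$ is truly non-constant on each half-line, so that Theorem~\ref{thm:shift}\eqref{item:minus}--\eqref{item:plus} applies with a strict inequality between the relevant values of $\ell$ — which is precisely what makes the thresholds $M_\pm$ finite.
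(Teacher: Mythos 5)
Your proposal is correct and takes essentially the same route as the paper: reduce to $v=1$ by dilation, use Proposition~\ref{prop:k} to see that $N(0,1)$ lies in $\calQ$ with $\ell_{N(0,1)}=c/f$, and invoke Theorem~\ref{thm:shift}~\eqref{item:plus}--\eqref{item:minus} via the non-constancy (indeed strict monotonicity on each half-line) of $\ell_{N(0,1)}$. The paper states these steps more tersely but the argument is the same.
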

\begin{proof}
It suffices to find $M>0$ and show that $N(m,1)\notin L(\uplus)$ for $|m|>M$ thanks to the fact that dilation preserves $L(\uplus)$. Recall that 
\[
\ell_{\ND}(x) = |x|k_{\ND}(x)=\frac{c}{f(x)},
\]
where $c=\pi^{-1}\sqrt{2/\pi}$ and $f$ is defined in \eqref{eq:f}. 
Because $\ND$ does not have a Boolean Gaussian component from Proposition \ref{prop:k} and the function $\ell_{\ND}$ is not constant on $(0,\infty)$ and is symmetric on $\R$, the desired conclusion follows by Theorem \ref{thm:shift}. 
\end{proof}

\begin{rem}[Estimate of $M$] According to Theorem \ref{thm:shift}, $N(m,1)$ is not in $L(\uplus)$ if $|m| > M_0$, where 
\begin{equation} \label{eq:inf}
M_0 = \inf\left\{\frac{b\ell_{N(0,1)}(a) - a \ell_{N(0,1)}(b)}{\ell_{N(0,1)}(b) -\ell_{N(0,1)}(a)}: -\infty < a < b <\infty,\ \ell_{N(0,1)}(a) < \ell_{N(0,1)}(b) \right\}.
\end{equation}
With some inspection helped by the identity
\[
\frac{b\ell_{N(0,1)}(a) - a \ell_{N(0,1)}(b)}{\ell_{N(0,1)}(b) -\ell_{N(0,1)}(a)} = \frac{(b - a)  \ell_{N(0,1)}(a) }{\ell_{N(0,1)}(b) -\ell_{N(0,1)}(a)}- a  = \frac{(b - a) \ell_{N(0,1)}(b) }{\ell_{N(0,1)}(b) -\ell_{N(0,1)}(a)} - b,  
\]
the infimum \eqref{eq:inf} turns out to be achieved in the limit $|b-a| \to 0$ and only when $a, b$ are both negative, so that we obtain 
\[
M_0  = \inf\left\{p(a) : -\infty < a <0 \right\}, \qquad  p(a):=\frac{\ell_{N(0,1)}(a)}{\ell_{N(0,1)}'(a)}-a.  
\]
In order to find the infimum of the function $p$, observe that 
\[
p'(a) =  \frac{-\ell_{N(0,1)}(a)\ell_{N(0,1)}''(a)}{\ell_{N(0,1)}'(a)^2}. 
\]
According to simulations, it is likely that $\ell_{N(0,1)}''$ in $(-\infty,0)$ has a unique zero (denoted $a_0$ below) and hence  $p$ takes its minimum at $a_0$; see Figure \ref{dia0}. The values $a_0$ and $M_0=p(a_0)$ are approximately $-2.03$ and $3.09$, respectively. Moreover, simulations also suggest that $M_0$ is the precise threshold, i.e.\ $N(m,1)$ is Boolean selfdecomposable if and only if $|m|\le M_0$; see Figures \ref{dia1}, \ref{dia2}.  Those simulations are performed on Mathematica Version 12.1.1, Wolfram Research, Inc., Champaign, IL. 
\end{rem}

\begin{figure}[htpb]
\begin{center}
\begin{minipage}{0.3\hsize}
\begin{center}
\includegraphics[width=40mm,clip]{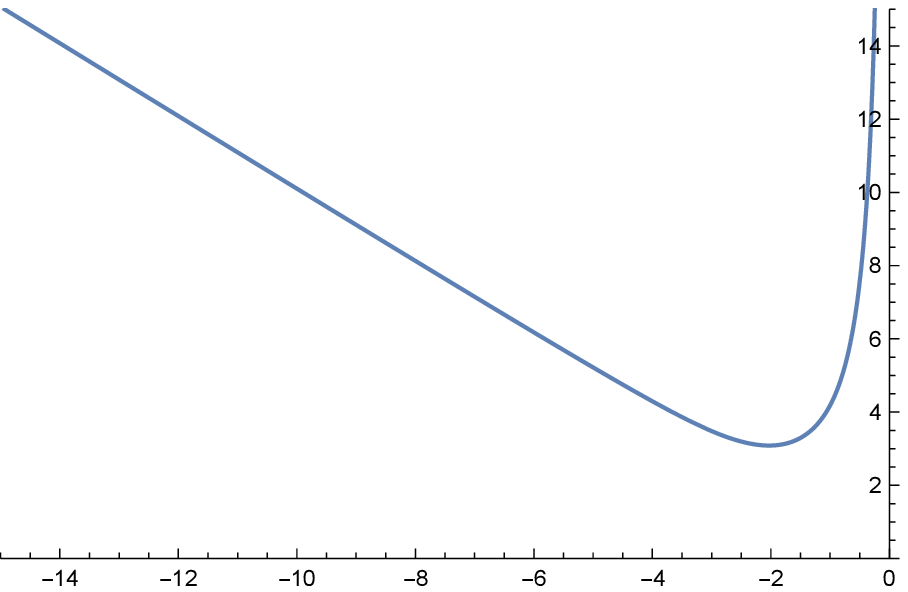}
\caption{the function $p$}\label{dia0}
\end{center}
  \end{minipage}
\begin{minipage}{0.3\hsize}
\begin{center}
\includegraphics[width=40mm,clip]{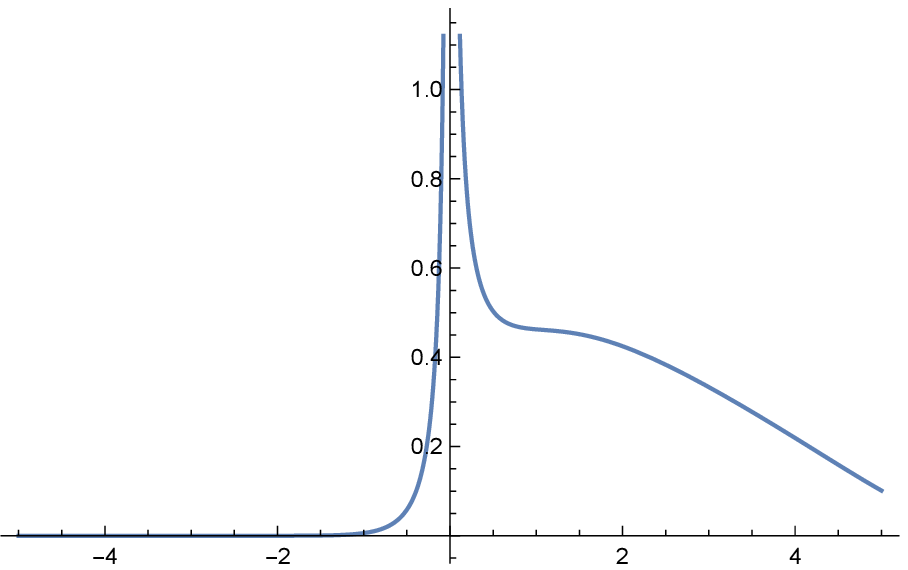}
\caption{the function $k_{N(3.05,1)}$}\label{dia1}
\end{center}
  \end{minipage}
\begin{minipage}{0.3\hsize}
\begin{center}
\includegraphics[width=40mm,clip]{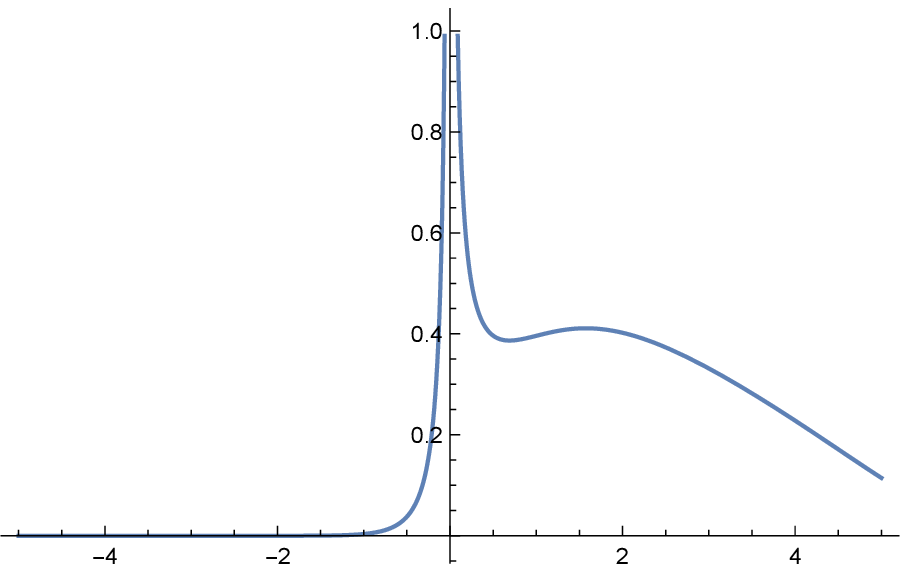}
\caption{the function $k_{N(3.2,1)}$}\label{dia2}
\end{center}
\end{minipage}
\end{center}
\end{figure}

 \section*{Acknowledgements} 
 
This research is supported by JSPS Open Partnership Joint Research Projects Grant Number JPJSBP120209921. 
Moreover, T.H.\ is supported by JSPS Grant-in-Aid for Young Scientists 19K14546; K.N.\ is supported by JSPS Grant-in-Aid for Young Scientists 21K13807; Y.U.\ is supported by JSPS Grant-in-Aid for Scientific Research (B) 19H01791 and JSPS Grant-in-Aid for Young Scientists 22K13925.


\end{document}